\newtheorem{theorem}{Theorem}[section]
\newtheorem{proposition}[theorem]{Proposition}
\newtheorem{definition}[theorem]{Definition}
\newtheorem{lemma}[theorem]{Lemma}
\begin{document}
	\title{\textbf{ Disconjugacy characterization by means of spectral of $(k,n-k)$ problems.}}
	\date{}
	\author{Alberto Cabada\footnote{Partially supported by Ministerio de Educaci\'on y Ciencia, Spain and FEDER, projects MTM2010-15314 and MTM2013-43014-P.}   \;and  Lorena Saavedra\\Department of Mathematical Analysis,\\ Faculty of Mathematics,\\University of Santiago de Compostela,\\ Santiago de Compostela, Galicia,
		Spain\\
		alberto.cabada@usc.es, lorena.saavedra@usc.es
	} 
	\maketitle
	
	\begin{abstract}
			This paper is devoted to the description of the interval of parameters for which  the general linear $n^{\rm th}$-order equation 
			\begin{equation}
			\label{e-Ln}
			T_n[M]\,u(t) \equiv u^{(n)}(t)+a_1(t)\, u^{(n-1)}(t)+\cdots +a_{n-1}(t)\, u'(t)+(a_{n}(t)+M)\,u(t)=0 \,,\quad t\in I\equiv[a,b],
			\end{equation}
			with $a_i\in C^{n-i}(I)$, is disconjugate on	$ I $.			
			Such interval is characterized by the closed  to zero eigenvalues of this problem coupled with $(k,n-k)$ boundary conditions, given by
			\begin{equation}
			\label{e-k-n-k}
			u(a)=\cdots=u^{(k-1)}(a)=u(b)=\cdots=u^{(n-k-1)}(b)=0\,,\quad 1\leq k\leq n-1\,.
			\end{equation}
			
	\end{abstract}

\section{Introduction}

There are a huge number of works related to the disconjugacy and its properties, (see \cite{Cop, Lev, Sha} for details). From the last half of the past century till now this subject of investigation has attracted important researchers who have stablished very interesting criteria to ensure such property for particular equations. 

This is the case of  \cite{Neh2}, where is characterized the disconjugacy of the second order equation $y''(t)+f(t)\, y(t)=0$, with $f \ge 0$, in terms of the least eigenvalue of the corresponding eigenvalue problem
$y''(t)+\lambda \, f(t)\, y(t)=0, y(a)=y'(b)=0$. In \cite{Barr}  the general equation $(p(t) y'(t))'+f(t)\, y(t)=0$ has been treated. On that paper some characterization by means of variational approach has given. More recently, in \cite{ClaHin} sufficient conditions to ensure the disconjugation of some second and fourth order equations are showed.

In \cite{Neh1} sufficient conditions (and different  necessary ones) for disconjugacy on $[a,+\infty)$, are obtained for linear differential equations of the form $y^{(2\,n)}(t)-(-1)^n\,p(t)\,y(t)=0$, with $p\ge 0$. 
Also, in \cite{Eli} conditions for disconjugacy of the linear differential equation $y^{(n)}(t)+p(t)\,y(t)=0 $ in $[a,+\infty)$ and $p(t)$ of constant sign, are given. In \cite{Sak} the disconjugacy of the linear differential equation $(r(t)\,x'(t))'+p(t)\,x'(t)+q(t)\,x(t)=0$, in $[a,b]$ is studied. Sufficient conditions to warrant the disconjugacy of the nonlinear 
 $p-$ Laplacian equation $\left( |u'(t)|^{p-1}\,u'(t)\right) '+q(t)\,|u(t)|^{p-2}\,u(t)=0$ on $[a,+\infty)$, have been obtained in\cite{NapPin}.

The aim of this paper consists on the  characterization of the disconjugacy of the general $n$-th order linear differential operator $u^{(n)}(t)+a_1(t)\, u^{(n-1)}(t)+\cdots +a_{n-1}(t)\, u'(t)+a_{n}(t)\,u(t)$ on any arbitrary interval $[a,b]$.

Tacking into account that the coefficient of $u$ can be uniquely decomposed as 
$$a_n(t)=\tilde{a}_n(t)+\frac{1}{b-a}\int_a^ba_n(s)\,ds, \quad t \in I,$$
it is obvious that such problem is equivalent to  study the set of parameters $M$ for which the linear differential equation \eqref{e-Ln} is disconjugate on $I$. To this end, we assume that such set is not empty, i.e.,  there exists at least a $\bar{M}$ such that $T_n[\bar{M}]\,u(t)=0$ is disconjugate on $I$.

Denote as $X_k$ the corresponding space of definition to the  $(k,n-k)$ boundary value conditions given in \eqref{e-k-n-k}.

{ \begin{equation*}
	X_k= \left\lbrace u\in C^n(I)\ \mid u(a) =\dots=u^{(k-1)}(a)=u(b)=\dots=u^{(n-k-1)}(b)=0\right\rbrace.
	\end{equation*}}

Now, in order to make the paper more readable, we introduce some previous concepts and results.

\begin{definition}
	Let $a_k\in C^{n-k}(I)$ for $k=1,\dots,n$. The linear differential equation (\ref{e-Ln}) of order $n$ is said to be disconjugate on an interval $J$ if every non trivial solution has, at most, $n-1$ zeros on $J$, multiple zeros being counted according to their multiplicity.
\end{definition}
	
%
%	\begin{definition}
%		The functions $u_1,\dots, u_n \in C^n(I)$ are said to form a Markov system on the interval $I$ if the $n$ Wronskians
%		\begin{equation}
%		W(u_1,\dots,u_k)=\left| \begin{array}{ccc}
%		u_1&\cdots&  u_k\\
%		\vdots&\cdots&\vdots\\
%		u_1^{(k-1)}&\cdots&u_k^{(k-1)}\end{array}\right| \,,\quad k=1,\dots,n \,,
%		\end{equation}
%		are positive throughout $I$.
%	\end{definition}
	The following results and definitions about this concept are collected on \cite[Chapter 3]{Cop}.
	
%	\begin{theorem}\label{T::4}
%		The linear differential equation (\ref{e-Ln}) has a Markov fundamental system of solutions on the compact interval $I$ if, and only if, it is disconjugate on $I$.
%		
%	\end{theorem}
	
	\begin{theorem}\label{T::1}
		If the equations $L_1\,y=0$ and $L_2\,y=0$ are disconjugate on the interval $I$, then the composite equation $L_1\,(L_2\,y)=0$ is also disconjugate on $I$.
	\end{theorem}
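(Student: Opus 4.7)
Write $n_1, n_2$ for the orders of $L_1, L_2$, so that $L_1\,L_2$ has order $n:=n_1+n_2$. The plan is to take an arbitrary nontrivial solution $y$ of $L_1(L_2\,y)=0$ and prove directly that $y$ has at most $n-1$ zeros on $I$, counted with their multiplicity.

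Set $z:=L_2\,y$, so that $L_1\,z=0$. If $z\equiv 0$, then $y$ itself is a nontrivial solution of $L_2\,y=0$ and, by the disconjugacy hypothesis on $L_2$, has at most $n_2-1\le n-1$ zeros on $I$, and there is nothing more to prove. Otherwise $z$ is a nontrivial solution of $L_1\,z=0$, so the disconjugacy of $L_1$ gives at most $n_1-1$ zeros of $z$ on $I$. It therefore suffices to establish the zero-counting estimate
\[
\#\{\text{zeros of } y \text{ in } I\}\;\le\;\#\{\text{zeros of } L_2\,y \text{ in } I\}+n_2,
\]
always with multiplicities. Combined with $\#\{\text{zeros of }z\}\le n_1-1$, this yields $\#\{\text{zeros of }y\}\le n-1$, as required.

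For the estimate I would invoke the P\'olya-type factorization of any disconjugate operator of order $n_2$ on $I$, a standard result from \cite[Chapter~3]{Cop}: there exist strictly positive functions $\rho_0,\dots,\rho_{n_2}$ on $I$ such that
\[
L_2\,u \;=\; \rho_{n_2}\,\frac{d}{dt}\!\left(\rho_{n_2-1}\,\frac{d}{dt}\!\left(\cdots\,\frac{d}{dt}(\rho_0\,u)\right)\cdots\right).
\]
Starting from $\rho_0\,y$, which has the same zeros with the same multiplicities as $y$, one iterates: multiplication by a positive $\rho_i$ preserves the zero count, while differentiation decreases the count of zeros (with multiplicities) by at most one, by Rolle's theorem together with the standard bookkeeping $m\mapsto m-1$ for a zero of multiplicity $m$. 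After $n_2$ applications of $d/dt$ interspersed with the multiplications, one arrives at $L_2\,y=z$, which therefore has at least $\#\{\text{zeros of }y\}-n_2$ zeros, giving the desired inequality.

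The main obstacle is exactly this zero-counting step: disconjugacy of $L_2$ by itself only says that solutions of $L_2\,u=0$ have few zeros, but here we need to compare the zeros of a generic $y$ with those of $L_2\,y$. The P\'olya factorization, which is equivalent to disconjugacy of $L_2$, converts the question into an iterated Rolle argument on first-order operators, where each step is transparent.
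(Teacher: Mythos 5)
Your argument is correct. Note that the paper itself offers no proof of this statement: it is quoted verbatim from Coppel \cite[Chapter 3]{Cop}, so the only meaningful comparison is with the standard proof there. Your route is essentially that standard one --- the key input is the equivalence, on a \emph{compact} interval, between disconjugacy and the P\'olya factorization $L_2 = \rho_{n_2}\,D\,\rho_{n_2-1}\cdots D\,\rho_0$ with all $\rho_i>0$, followed by the iterated Rolle bookkeeping showing that each differentiation loses at most one zero (with multiplicity) and each multiplication by a positive weight loses none. The only organisational difference from Coppel is that he concatenates the two factorizations of $L_1$ and $L_2$ into a single P\'olya factorization of the composite operator of order $n_1+n_2$ (the middle weight $\rho^{(1)}_0\rho^{(2)}_{n_2}$ being still positive) and then applies Rolle $n$ times to a nontrivial solution directly, whereas you factor the count through $z=L_2y$ and invoke disconjugacy of $L_1$ as a black box to bound the zeros of $z$; the two are logically equivalent, and your split has the small advantage of needing the factorization only for $L_2$. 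Your handling of the degenerate case $z\equiv 0$ is the right precaution. The one point worth making explicit if this were written out in full is the smoothness bookkeeping in the Rolle step: the intermediate functions $\rho_i\,D(\cdots)$ have limited differentiability, so ``multiplicity'' must be counted only up to the available order of differentiation --- but since a nontrivial solution of an $n$-th order equation has zeros of multiplicity at most $n-1$ and the $\rho_i$ inherit enough regularity from the Wronskian construction, the count goes through exactly as you state.
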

	
	Defining the distance between two equations \eqref{e-Ln}$_1$ and \eqref{e-Ln}$_2$ by
	$\sup_{t\in I}\sum_{k=1}^n \left| a_{k,1}(t)-a_{k,2}(t)\right|$,
	we have the following result in the correspondent metric space. 
	\begin{proposition}
		\label{P::1}
		The set of all disconjugate equations (\ref{e-Ln}) on a compact interval $I$ is connected and open.
	\end{proposition}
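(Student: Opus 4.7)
The plan is to combine the P\'olya--Mammana factorization with the Wronskian (Markov system) characterization of disconjugacy; both are developed in \cite[Ch.~3]{Cop}.

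\emph{Openness.} Disconjugacy of \eqref{e-Ln} on $I$ is equivalent to the existence of $n$ solutions $y_1,\dots,y_n$ whose successive Wronskians $W(y_1,\dots,y_k)$, $k=1,\dots,n$, are strictly positive throughout $I$. Standard continuous dependence of linear ODE solutions (and of their derivatives up to the needed order) on the coefficients shows that these finitely many Wronskians vary continuously, in the sup norm on the compact set $I$, with respect to the metric introduced above. Since each of them is bounded away from zero for the given equation, positivity persists under all sufficiently small perturbations, which are therefore disconjugate.

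\emph{Connectedness.} I would show that every disconjugate equation can be joined, inside the set of disconjugate equations, to the fixed base point $u^{(n)}=0$ (which is disconjugate because its solutions are polynomials of degree at most $n-1$). By the P\'olya--Mammana factorization, disconjugacy of \eqref{e-Ln} on the compact interval $I$ is equivalent to the existence of a representation
\[
T_n[M]\,u=\alpha_n\bigl(\alpha_{n-1}\bigl(\cdots(\alpha_0 u)'\cdots\bigr)'\bigr)'
\]
with strictly positive continuous factors $\alpha_0,\dots,\alpha_n$ of appropriate regularity. Setting $\alpha_k^{(s)}(t)=(1-s)\alpha_k(t)+s$ for $s\in[0,1]$, each $\alpha_k^{(s)}$ remains strictly positive; the operator $L_s$ built from these factors is then a composition of first-order operators $v\mapsto(\alpha_k^{(s)}v)'$, whose equations $L v=0$ are trivially disconjugate. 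Iterated application of Theorem~\ref{T::1} yields disconjugacy of $L_s$ for every $s\in[0,1]$. The coefficients of $L_s$ are polynomial expressions in the $\alpha_k^{(s)}$ and their derivatives, so $s\mapsto L_s$ is continuous in the chosen metric; it provides a continuous path from \eqref{e-Ln} (at $s=0$) to $u^{(n)}=0$ (at $s=1$). As the base point $u^{(n)}=0$ is common to every disconjugate equation, the set is path-connected.

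The principal technical point is invoking the P\'olya--Mammana factorization under the regularity $a_i\in C^{n-i}(I)$ and tracking the continuous dependence of the Wronskians on the coefficients up to the correct order; both are standard in \cite[Ch.~3]{Cop} but demand some care in keeping the smoothness classes aligned so that the chosen metric is strong enough to guarantee the needed continuity.
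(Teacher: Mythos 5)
The paper does not prove this proposition; it is quoted verbatim from \cite[Chapter 3]{Cop}, so there is no in-text argument to compare against. Your proof is essentially the standard one from that reference and is correct: openness via the Markov-system characterization (disconjugacy on compact $I$ $\Leftrightarrow$ existence of solutions with all successive Wronskians positive on $I$) plus continuous dependence, in the $C^{n-1}$ sense uniformly on $I$, of the solutions with fixed initial data on the coefficients; connectedness via a homotopy of the P\'olya--Mammana factors to the constant $1$, with disconjugacy along the path supplied by iterating Theorem~\ref{T::1} on the first-order factors $v\mapsto(\alpha_k^{(s)}v)'$, each of which is disconjugate because its nontrivial solutions $c/\alpha_k^{(s)}$ never vanish. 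One small point you should make explicit: for intermediate $s$ the operator $L_s$ has leading coefficient $\prod_k\alpha_k^{(s)}(t)$, which is positive but not identically $1$, so $L_s$ is not literally of the form \eqref{e-Ln}; you must divide by this leading coefficient (which is bounded away from zero on $[0,1]\times I$, so the solution set, hence disconjugacy, and the continuity of the coefficients in the stated metric are all preserved) to keep the path inside the metric space of equations under consideration. Together with the regularity bookkeeping you already flag ($\rho_k$ built from ratios of Wronskians lies in $C^{n-k}(I)$, so the coefficients of the normalized $L_s$ stay in the classes $C^{n-i}(I)$), this closes the argument.
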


		\begin{definition}\label{Def::1}
			Let $a\in\mathbb{R}$, denote the first right point conjugate  of $a$ for the linear differential equation (\ref{e-Ln})  by
			\[\eta_M(a)=\sup\left\lbrace b>a\quad\mid\quad \text{equation (\ref{e-Ln}) is disconjugate on } [a,b]\right\rbrace \in (a,\infty]\,.\]
		\end{definition}
		
		We consider a fundamental system of solutions $y_1[M](t),\dots,y_n[M](t)$ of equation \eqref{e-Ln}, where every $y_k[M](t)$ is uniquely determined by the following initial conditions:
		
		\[y_k^{(n-k)}[M](a)=1\,,\quad y_k^{(n-j)}[M](a)=0\,,\ j=1,\dots,n\,,\ j\neq k\,.\]
		
		Then, we denote  the $n-1$ Wronskians as

			\begin{equation}
					W^n_k[M](t):=\left| \begin{array}{ccc}
						y_1[M](t)&\dots&y_k[M](t)\\
					\vdots&\cdots&\vdots\\
					y_1[M]^{(k-1)}(t)&\cdots&y_k[M]^{(k-1)}(t)\end{array}\right| \,,\quad k=1,\dots,n-1 \,.
					\end{equation}
					
			\begin{proposition}\label{P::3}
			There exists a solution of equation (\ref{e-Ln}) which verifies the boundary conditions $(k,n-k)$ on $[a,b]$ if, and only if, $W^n_{n-k}[M](b)=0$.
			\end{proposition}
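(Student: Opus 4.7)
\medskip

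My plan is to parameterize the solution space via the given fundamental system and translate each half of the boundary conditions into a linear-algebraic condition.

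First, write any solution $u$ of \eqref{e-Ln} as $u(t)=\sum_{j=1}^n c_j\,y_j[M](t)$. The key observation is the dual role of the index in the initial conditions: $y_j^{(n-j)}[M](a)=1$ while $y_j^{(i)}[M](a)=0$ for every other $i\in\{0,\dots,n-1\}$. Hence, for each $i\in\{0,\dots,n-1\}$, the matrix $\bigl(y_j^{(i)}[M](a)\bigr)_{i,j}$ is a permutation matrix, and one gets $u^{(i)}(a)=c_{n-i}$. Therefore, the left boundary conditions $u(a)=u'(a)=\cdots=u^{(k-1)}(a)=0$ are equivalent to $c_n=c_{n-1}=\cdots=c_{n-k+1}=0$, i.e.\ to the fact that $u$ belongs to the $(n-k)$-dimensional subspace
\[
V_{a,k}=\mathrm{span}\{y_1[M],\dots,y_{n-k}[M]\}.
\]

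Second, impose the right boundary conditions on an element $u=\sum_{j=1}^{n-k}c_j\,y_j[M]$ of $V_{a,k}$. The equations $u^{(i)}(b)=0$ for $i=0,1,\dots,n-k-1$ form a homogeneous linear system in the unknowns $c_1,\dots,c_{n-k}$, whose coefficient matrix is precisely
\[
\bigl(y_j^{(i)}[M](b)\bigr)_{\,0\le i\le n-k-1,\;1\le j\le n-k},
\]
whose determinant is, by definition, $W^n_{n-k}[M](b)$. A nontrivial choice of $(c_1,\dots,c_{n-k})$ exists if and only if this determinant vanishes, which proves both implications of the proposition.

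\medskip

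There is no real obstacle beyond careful index bookkeeping: one must verify that the left conditions eliminate exactly the coefficients $c_{n-k+1},\dots,c_n$ (and not, say, $c_1,\dots,c_k$), which hinges on the slightly unusual normalization $y_j^{(n-j)}[M](a)=1$ adopted in the excerpt. Once the correct $n-k$ survivors are identified, the identification of the system determinant with $W^n_{n-k}[M](b)$ is immediate from the stated definition of the Wronskians, and the conclusion follows from the standard criterion for the existence of nontrivial solutions of a square homogeneous linear system.
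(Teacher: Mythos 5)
Your proof is correct: the initial conditions $y_j^{(n-j)}[M](a)=1$, $y_j^{(i)}[M](a)=0$ otherwise, do give $u^{(i)}(a)=c_{n-i}$, so the $k$ conditions at $a$ kill exactly $c_{n-k+1},\dots,c_n$, and the remaining $n-k$ conditions at $b$ form a square homogeneous system whose determinant is $W^n_{n-k}[M](b)$ by the paper's definition of the Wronskians. The paper does not prove this proposition itself (it is quoted from Coppel's monograph), and your argument is exactly the standard linear-algebra proof of that result, with the only point requiring care --- which coefficients survive the left-endpoint conditions --- handled correctly.
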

		\begin{definition}
		Denote $\omega_M(a)$ as the least $b>a$, if one exists, at which one of the Wronskians $W^n_1[M](b),\dots,$ $W^n_{n-1}[M](b)$ vanishes.
		\end{definition}
		
		The next result gives us a relation between this concept and the one given on Definition \ref{Def::1}.
		
	\begin{proposition}\label{P::4}
		$\eta_M(a)=\omega_M(a)$.
	\end{proposition}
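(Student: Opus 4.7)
The plan is to establish both inequalities, $\eta_M(a)\leq \omega_M(a)$ and $\omega_M(a)\leq \eta_M(a)$. The first is an immediate consequence of Proposition \ref{P::3}: assume $\omega_M(a)=c<\infty$, so that some Wronskian $W^n_j[M](c)=0$ with $j\in\{1,\dots,n-1\}$. Setting $k=n-j$, Proposition \ref{P::3} furnishes a nontrivial solution of \eqref{e-Ln} satisfying the $(k,n-k)$ boundary conditions on $[a,c]$; such a solution has $n$ zeros on $[a,c]$ counted with multiplicity, so \eqref{e-Ln} is not disconjugate on $[a,c]$ and therefore $\eta_M(a)\leq c$.

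For the reverse inequality I would argue by contradiction. Assume $\eta:=\eta_M(a)<\omega_M(a)$. By continuity of the Wronskians, pick $\delta>0$ with $\eta+\delta<\omega_M(a)$ so that $W^n_k[M](t)\neq 0$ for every $t\in(a,\eta+\delta]$ and every $k=1,\dots,n-1$. The standard Polya-type construction then factors $T_n[M]$ on $(a,\eta+\delta]$ as
\[
T_n[M]\,u(t)=\rho_n(t)\,\frac{d}{dt}\!\left(\rho_{n-1}(t)\,\frac{d}{dt}\!\left(\cdots\rho_1(t)\,\frac{d}{dt}\!\left(\rho_0(t)\,u(t)\right)\cdots\right)\right),
\]
where the functions $\rho_i$ are positive on $(a,\eta+\delta]$ and are built from ratios of consecutive Wronskians. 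Since every first-order factor is trivially disconjugate, repeated application of Theorem \ref{T::1} shows that $T_n[M]$ is disconjugate on every compact subinterval of $(a,\eta+\delta]$.

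It remains to extend disconjugacy up to the endpoint $a$. Suppose for contradiction a nontrivial solution $u$ of \eqref{e-Ln} has $n$ zeros (with multiplicity) on $[a,\eta+\delta]$, and let $k\geq 0$ be the multiplicity of $a$ as a zero of $u$. If $k=0$, all zeros lie in $(a,\eta+\delta]$, contradicting the disconjugacy just established on a compact subinterval containing them. If $k\geq 1$, a Rolle-type reduction applied to the factored operator above (or, equivalently, an iterative use of Proposition \ref{P::3} at appropriately chosen points) converts the configuration of $k$ zeros at $a$ together with $n-k$ zeros in $(a,\eta+\delta]$ into the vanishing of some $W^n_{n-k}[M]$ at a point of $(a,\eta+\delta]$, contradicting $\eta+\delta<\omega_M(a)$. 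Hence $T_n[M]$ is disconjugate on $[a,\eta+\delta]$, contradicting the definition of $\eta_M(a)=\eta$.

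The main obstacle I expect is the last reduction: a priori the $n-k$ interior zeros of $u$ need not coincide with a single right endpoint $b^{\ast}$ as required by Proposition \ref{P::3}, so converting the ``$k+(n-k)$ zeros'' configuration into the vanishing of one specific Wronskian requires either a careful Rolle-type argument run through the Polya factorization or a coalescence/approximation procedure, which is the technical core of the classical treatment in \cite[Chapter 3]{Cop}.
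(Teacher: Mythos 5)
The paper never proves this proposition: it is imported verbatim from \cite[Chapter 3]{Cop}, so your attempt has to stand on its own. Your first inequality is correct and complete: if some $W^n_j[M]$ vanishes at $c=\omega_M(a)$, Proposition \ref{P::3} produces a nontrivial solution with $n$ zeros on $[a,c]$ counted with multiplicity, hence $\eta_M(a)\leq c$. The second half has the right skeleton (nonvanishing of all the Wronskians on $(a,\eta+\delta]$ should force disconjugacy on the closed interval $[a,\eta+\delta]$, contradicting the definition of $\eta_M(a)$), and the P\'olya factorization correctly disposes of solutions having no zero at $a$.

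However, the step you flag at the end is a genuine gap, and the repair you suggest (an ``iterative use of Proposition \ref{P::3}'') cannot work as stated: that proposition only detects two-point $(k,n-k)$ configurations, whereas a putative solution with an $m$-fold zero at $a$ has its remaining $n-m$ zeros scattered through $(a,\eta+\delta]$, and no Rolle step applied to the full operator returns you to a two-point configuration. The classical way to close this is different. Writing $u=\sum_{j=1}^{n}c_j\,y_j[M]$, the initial conditions defining the $y_j[M]$ give $c_j=u^{(n-j)}(a)$, so a solution vanishing to order at least $m$ at $a$ lies in the span of $y_1[M],\dots,y_{n-m}[M]$ alone. Since $W^n_1[M],\dots,W^n_{n-m}[M]$ do not vanish on $(a,\eta+\delta]$, these functions form a Markov system there, and P\'olya's theorem says their span is a Chebyshev space of dimension $n-m$ on $(a,\eta+\delta]$, i.e.\ no nontrivial element has $n-m$ zeros there counted with multiplicity. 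Running this for each $m=0,1,\dots,n-1$ yields disconjugacy on $[a,\eta+\delta]$ and finishes the argument. So the architecture you chose is the standard one, but the ``technical core'' you defer is precisely the content of the proposition; without the Markov/Chebyshev step (or an equivalent coalescence argument) the proof is incomplete.
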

	
	\begin{proposition}\label{P::2}
		Let $b=\eta_M(a)$ and let $n-k\in\left\lbrace 1,\dots,n-1\right\rbrace $ be   such that $W^n_{n-k}[M](b)=0$ and $W^n_{\ell}[M](b)\neq 0$ for every $\ell<n-k$. The corresponding solution of \eqref{e-Ln} with $(k,n-k)$ boundary conditions is uniquely determined up to a constant factor, and does not vanish on the open interval $(a,b)$.
	\end{proposition}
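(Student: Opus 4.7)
My plan is to prove uniqueness by linear algebra and nonvanishing by a limiting procedure that exploits the disconjugacy of \eqref{e-Ln} on every subinterval $[a,b']$ with $b'<b$.

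For the uniqueness, I would first note that since $y_m[M]$ vanishes to order exactly $n-m$ at $a$, the subspace $V$ of solutions of \eqref{e-Ln} satisfying $u^{(j)}(a)=0$ for $j=0,\dots,k-1$ coincides with $\mathrm{span}\{y_1[M],\dots,y_{n-k}[M]\}$, and hence has dimension $n-k$. Writing $u=\sum_{m=1}^{n-k}c_m y_m[M]$, the remaining conditions $u^{(j)}(b)=0$, $j=0,\dots,n-k-1$, turn into a homogeneous linear system whose matrix $A=(y_m^{(j)}[M](b))_{j,m}$ satisfies $\det A=W^n_{n-k}[M](b)=0$ and has top-left $(n-k-1)\times(n-k-1)$ minor $W^n_{n-k-1}[M](b)\neq0$. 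Therefore $A$ has rank exactly $n-k-1$ and its kernel is one-dimensional, which gives the $(k,n-k)$ solution $u$ up to a scalar factor.

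For the nonvanishing on $(a,b)$, I would approximate $u$ by solutions of a slightly relaxed problem on smaller intervals. For each $b'\in(a,b)$ the equation is disconjugate on $[a,b']$, so by Proposition \ref{P::4} we have $W^n_{n-k-1}[M](b')\neq0$; this allows us to pick a unique (up to scale) solution $v_{b'}\in V$ with $v_{b'}^{(j)}(b')=0$ for $j=0,\dots,n-k-2$, normalized so that $v_{b'}^{(k)}(a)=1$. The zero of $v_{b'}$ at $a$ has order at least $k$ and the zero at $b'$ at least $n-k-1$, so $v_{b'}$ has at least $n-1$ zeros counting multiplicities on $[a,b']$; disconjugacy forces this to be an equality and precludes any interior zero, so $v_{b'}$ is nonvanishing and of constant sign on $(a,b')$. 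Continuous dependence then gives $v_{b'}\to v$ in $C^n([a,b])$ as $b'\uparrow b$, where the limit $v$ satisfies $v^{(j)}(b)=0$ for $j=0,\dots,n-k-2$ and $v^{(k)}(a)=1$; the uniqueness already proved identifies $v$ as a nonzero scalar multiple of $u$, so $u$ has constant sign on $(a,b)$.

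The last step is to upgrade constant sign to strict nonvanishing. If $u(c)=0$ at some $c\in(a,b)$, then $c$ is a zero of $u$ of even multiplicity $m\geq 2$. The equation is disconjugate on $[a,c]$ since $c<b=\eta_M(a)$, and by monotonicity of the right-conjugate point (a consequence of Propositions \ref{P::1} and \ref{P::4}) also on $[c,b]$; counting zeros on each piece yields $k+m\leq n-1$ and $m+(n-k)\leq n-1$. The main obstacle is that these two inequalities contradict $m\geq 2$ only in the extreme cases $k\in\{1,2,n-2,n-1\}$. For intermediate values of $k$ I would iterate the argument by applying the P\'olya--Mammana factorization of the disconjugate operator (see \cite[Chapter 3]{Cop}) to reduce its effective order, until the extremal bound is reached and produces the desired contradiction.
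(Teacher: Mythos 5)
The paper does not actually prove this proposition---it is quoted from \cite[Chapter~3]{Cop}---so your attempt has to stand on its own. The first two thirds of it do. The uniqueness argument is correct: the space $V$ of solutions vanishing to order at least $k$ at $a$ is $\mathrm{span}\{y_1[M],\dots,y_{n-k}[M]\}$, and the rank of the boundary matrix at $b$ is pinned down by $W^n_{n-k-1}[M](b)\neq 0$ together with $W^n_{n-k}[M](b)=0$. The limiting argument also correctly yields $u\geq 0$ on $(a,b)$ after normalization, with two small repairs: the limit $v$ is identified with a multiple of $u$ not by the uniqueness of the $(k,n-k)$ problem but by the one-dimensionality of the space of solutions in $V$ vanishing to order $n-k-1$ at $b$ (which again follows from $W^n_{n-k-1}[M](b)\neq 0$, since $u$ lies in that space); and the convergence $v_{b'}\to v$ should be justified via Cramer's rule, the denominators $W^n_{n-k-1}[M](b')$ staying bounded away from zero as $b'\uparrow b$.

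The genuine gap is the final upgrade from $u\geq 0$ to $u\neq 0$ on $(a,b)$, and you have named it yourself. Your zero count gives $m\leq\min(k-1,\,n-k-1)$, which contradicts $m\geq 2$ only when $k\leq 2$ or $k\geq n-2$; for $n\geq 6$ and $3\leq k\leq n-3$ nothing is proved, and the closing sentence about iterating the P\'olya--Mammana factorization is not an argument: you do not say which reduced-order operator the interior double zero of $u$ is transferred to, nor why the boundary multiplicities at $a$ and $b$ survive the reduction, and neither is automatic (differentiating the quotients appearing in the factorization destroys the zero pattern at the endpoints in an uncontrolled way). There is a second, independent problem in the same step: counting zeros on $[c,b]$ requires disconjugacy on the \emph{closed} interval $[c,b]$, i.e.\ the strict inequality $\eta_M(c)>\eta_M(a)$; only the weak inequality $\eta_M(c)\geq\eta_M(a)$ follows immediately from the definitions, and strictness is a nontrivial theorem rather than an evident consequence of Propositions \ref{P::1} and \ref{P::4}. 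The complete classical proof excludes interior zeros by a different mechanism (exploiting that $\omega_M(a)=b$ forces \emph{all} the Wronskians $W^n_1[M],\dots,W^n_{n-1}[M]$ to be nonzero on $(a,b)$, not just the ones your count uses), so the intermediate-$k$ case really does require an idea that is missing here.
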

	
	Now, we are going to introduce the concept of Green's function related to the operator $T_n[M]$ coupled with boundary conditions (\ref{e-k-n-k}), see \cite{Cab} for details.
	
	\begin{definition}
		We say that $g_M$ is a Green's function for problem (\ref{e-Ln})-(\ref{e-k-n-k}) if it satisfies the following properties:
		\begin{itemize}
			\item[$(g_1)$] $g_M$ is defined on the square $I\times I$ (except $t=s$ if $n=1$).
			\item[$(g_2)$] For $k=0,1,\dots,n-2$, the partial derivatives $\dfrac{\partial^k g_M}{\partial t^k}$ exist and they are continuous on $I\times I$.
			\item[$(g_3)$] $\dfrac{\partial^{n-1} g_M}{\partial t^{n-1}}$ and $\dfrac{\partial^n g_M}{\partial t^n}$ exist and they are continuous on the triangles $a\leq s<t\leq b$ and $a\leq t<s\leq b$.
			\item[$(g_4)$] For each $s\in(a,b)$, the function $t\rightarrow g_M(t,s)$ is a solution of the differential equation (\ref{e-Ln})  on $[a,s)\cup (s,b]$.
			\item[$(g_5)$] For each $t\in(a,b)$ there exist the lateral limits
		{\footnotesize	\[\dfrac{\partial^{n-1} }{\partial t^{n-1}}g_M(t^-,t)=\dfrac{\partial^{n-1} }{\partial t^{n-1}}g_M(t,t^+)\quad\text{and}\quad \dfrac{\partial^{n-1} }{\partial t^{n-1}}g_M(t,t^-)=\dfrac{\partial^{n-1} }{\partial t^{n-1}}g_M(t^+,t)\]}
		and, moreover
	{\small 	\[\dfrac{\partial^{n-1} }{\partial t^{n-1}}g_M(t^+,t)-\dfrac{\partial^{n-1} }{\partial t^{n-1}}g_M(t^-,t)=\dfrac{\partial^{n-1} }{\partial t^{n-1}}g_M(t,t^-)-\dfrac{\partial^{n-1} }{\partial t^{n-1}}g_M(t,t^+)=1\,.\]}
	\item[$(g_6)$] For each $s\in(a,b)$, the function $t\rightarrow g_M(t,s)$ satisfies the boundary conditions $(k,n-k)$, i.e.,
{\small 	\[g_M(a,s)=\cdots=\dfrac{\partial^{k-1} }{\partial t^{k-1}}g_M(a,s)=g_M(b,s)=\cdots=\dfrac{\partial^{n-k-1} }{\partial t^{n-k-1}}g_M(b,s)\,.\]}
		\end{itemize}
	\end{definition}
	
		Denote the Green's function related to  the operator $T_n[M]$ in $X_k$ as $g_{M,k}$.
	
	If equation \eqref{e-Ln} is disconjugate on $I$ and $u$ is a solution of problem $T_n[M]\,u(t)=\sigma(t)$, $t\in I$, with boundary conditions $(k,n-k)$, it is uniquely determined by the expression 
	\[u(t)=\int_a^bg_{M,k}(t,s)\,\sigma(s)\,ds\,.\]

	We  also mention a  result which appears on \cite[Chapter 3, Section 6]{Cop} and that connects the disconjugacy and the sign of Green's function related to  problem (\ref{e-Ln})-(\ref{e-k-n-k}).
	
	\begin{lemma}\label{L::1}
		If the linear differential equation (\ref{e-Ln}) is disconjugate on $I$ and $g(t,s)$ is the Green's function related to problem (\ref{e-Ln})-(\ref{e-k-n-k}), by defining $p(t)=(t-a)^k\,(t-b)^{n-k}$ we have that
	\[
		g(t,s)\,p(t)\geq 0\,,\quad \forall\,(t,s)\in I \times I\quad \text{and}\quad
		\dfrac{g(t,s)}{p(t)}>0\,,\quad \forall\,(t,s)\in I\times (a,b)\,.\]
	\end{lemma}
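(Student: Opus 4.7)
Fix $s\in(a,b)$ and set $u(t):=g(t,s)$. The desired inequality reduces to showing that $u$ has constant sign on $(a,b)\setminus\{s\}$, equal to that of $p(t)=(t-a)^k(t-b)^{n-k}$ there; continuity of $u$ at $s$ and the joint vanishing of $u$ and $p$ at $a,b$ then yield $u\,p\ge 0$ on $I\times I$ and $u/p>0$ on $I\times(a,b)$.

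I would argue by zero counting. First, one rules out $u\equiv 0$ on either of $[a,s]$ and $[s,b]$: if $u\equiv 0$ on $[a,s]$, the continuity of $u,u',\ldots,u^{(n-2)}$ at $s$ together with the jump condition $u^{(n-1)}(s^+)-u^{(n-1)}(s^-)=1$ force $u|_{[s,b]}$ to be a non-trivial solution of $T_n[M]\,u=0$ with a zero of order $n-1$ at $s$ and order at least $n-k$ at $b$, hence at least $2n-k-1\ge n$ zeros on $[s,b]$, contradicting disconjugacy on the subinterval $[s,b]\subset I$. The case $u\equiv 0$ on $[s,b]$ is symmetric.

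Next, suppose $u(t_0)=0$ for some $t_0\in(a,s)$ (the case $t_0\in(s,b)$ is analogous). Let $y$ be the extension of $u|_{[a,s]}$ to a solution of $T_n[M]\,y=0$ on $I$, and set $z:=u-y$. Then $z\equiv 0$ on $[a,s]$, and on $[s,b]$ it satisfies $T_n[M]\,z=0$ with $z^{(j)}(s)=0$ for $j=0,\ldots,n-2$ and $z^{(n-1)}(s)=1$; disconjugacy on $[s,b]$ then forces all $n-1$ zeros of $z$ to cluster at $s$, so by its Taylor expansion $z(t)>0$ on $(s,b]$. The boundary conditions on $u$ at $b$ translate into $y^{(j)}(b)=-z^{(j)}(b)$ for $j=0,\ldots,n-k-1$; combined with the $k$ zeros of $y$ at $a$ and the zero at $t_0$, a suitable linear combination of $y$ with an element of the $k$-dimensional space of solutions of $T_n[M]\phi=0$ vanishing to order $\ge n-k$ at $b$ would produce a non-trivial solution carrying at least $n$ zeros on $I$, contradicting disconjugacy.

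Once $u\neq 0$ on $(a,b)\setminus\{s\}$, its sign is constant on each of $(a,s)$ and $(s,b)$ and agrees across $s$ by continuity. The common sign is then determined from the leading Taylor term $u(t)\sim u^{(k)}(a)(t-a)^k/k!$ near $a$; the sign of $u^{(k)}(a)$ can be read off the representation of $g$ as a quotient of determinants in the fundamental system $y_1[M],\ldots,y_n[M]$, using that by Propositions \ref{P::3} and \ref{P::4} the Wronskian $W^n_{n-k}[M]$ does not vanish on $(a,b]$ and hence has constant sign there, identifiable from its behavior as $t\to a^+$. The main obstacle is producing the contradicting solution in the previous paragraph: the bare count $k+1$ of zeros of $y$ (from $a$ and $t_0$) does not by itself exceed the disconjugacy bound $n-1$ for intermediate $k$, so one must exploit the boundary data at $b$ encoded by $z$ to manufacture additional zeros in a modification of $y$.
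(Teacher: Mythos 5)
First, a point of reference: the paper does not prove this lemma at all --- it is quoted verbatim from \cite{Cop} (Chapter 3, Section 6) --- so there is no internal argument to compare yours against, and the burden of a complete proof is real. Judged on its own, your attempt has a genuine gap, and it sits exactly where you yourself locate it. Your first step (ruling out $g(\cdot,s)\equiv 0$ on $[a,s]$ or $[s,b]$) is sound: the $(n-1)$-fold zero at $s$ forced by continuity and the unit jump, together with the boundary zeros, give a nontrivial solution with at least $n$ zeros on a subinterval of $I$. But excluding an interior zero $t_0$ is the entire content of the lemma, and the only genuine solution you produce, $y$ (the extension of $g(\cdot,s)|_{[a,s]}$), carries only $k+1$ certified zeros ($k$ at $a$, one at $t_0$), which is $\le n-1$ whenever $k\le n-2$. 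The repair you sketch --- adding to $y$ an element $\phi$ of the $k$-dimensional space of solutions vanishing to order $\ge n-k$ at $b$ --- cannot work as stated: $y+c\,\phi$ keeps the $k$-fold zero at $a$ and the zero at $t_0$ only if $\phi$ itself has them, and it acquires the $(n-k)$-fold zero at $b$ only if $y$ does (it does not, since $y(b)=-z(b)<0$); a nontrivial $\phi$ with a $k$-fold zero at $a$ and an $(n-k)$-fold zero at $b$ would itself already violate disconjugacy, so no ``suitable'' $\phi$ exists. The same counting shortfall blocks the two cases you leave untreated: $g(s,s)=0$ with a sign change across $s$, and the possibility that the zeros of $g(\cdot,s)$ at $a$ or $b$ have multiplicity strictly greater than $k$ or $n-k$ (which must be excluded for $g(t,s)/p(t)>0$ to hold at $t=a,b$).

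What actually closes the gap is machinery beyond counting zeros of individual solutions. The classical route applies the zero count to the piecewise-smooth function $u=g(\cdot,s)$ itself, which genuinely has $n+1$ zeros ($k$ at $a$, $n-k$ at $b$, one at $t_0$): disconjugacy on the compact interval $I$ yields a P\'olya factorization $T_n[M]=\rho_n\frac{d}{dt}\rho_{n-1}\frac{d}{dt}\cdots\rho_0$ with $\rho_j>0$, and a generalized Rolle cascade on the quasi-derivatives of $u$ (continuous up to order $n-2$, with the $(n-1)$-st quasi-derivative piecewise constant and carrying a nonzero jump at $s$) forces $u$ to vanish identically on one side of $s$, contradicting your Step 1. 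The alternative is the determinantal representation of $g$ in a Markov fundamental system, whose sign is read off from the positivity of the associated Wronskians. Without one of these ingredients the argument does not close, and your final paragraph (determining the common sign from $u^{(k)}(a)$) presupposes exactly the non-vanishing and exact-multiplicity facts that are still unproved.
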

	In the sequel, we introduce two conditions on $g_M(t,s)$ that will be used along the paper, see \cite[Section 1.8]{Cab}.
	\begin{itemize}
		\item[$(P_g$)] Suppose that there is a continuous function $\phi(t)>0$ for all $t\in (a,b)$ and $k_1,\ k_2\in \mathcal{L}^1(I)$, such that $0<k_1(s)<k_2(s)$ for a.e. $s\in I$, satisfying
		\[\phi(t)\,k_1(s)\leq g_M(t,s)\leq \phi(t)\, k_2(s)\,,\quad \text{for a. e. } (t,s)\in I \times I \,.\]
		\item[($N_g$)] Suppose that there is a continuous function $\phi(t)>0$ for all $t\in (a,b)$ and $k_1,\ k_2\in \mathcal{L}^1(I)$, such that $k_1(s)<k_2(s)<0$ for a.e. $s\in I$, satisfying
		\[\phi(t)\,k_1(s)\leq g_M(t,s)\leq \phi(t)\, k_2(s)\,,\quad \text{for a. e. }(t,s)\in I \times I\,.\]
	\end{itemize}
	Next result, which appears in \cite{CabSaa}, gives us a property of the operator under the disconjugacy hypothesis.
	
	\begin{lemma}\label{L::2}
		Let $\bar{M}\in\mathbb{R}$ be such that $T_n[\bar{M}]\,u(t)=0$ is disconjugate on $I$. Then the following properties are fulfilled:
		\begin{itemize}
			\item If $n-k$ is even, then $T_n[\bar{M}]$ is a inverse positive operator on $X_k$ and its related Green's function, $g_{\bar{M}}(t,s)$, satisfies ($P_g$).
			
			\item If $n-k$ is odd, then $T_n[\bar{M}]$ is a inverse negative operator on $X_k$  and its related Green's function satisfies ($N_g$).
			
		\end{itemize}
	\end{lemma}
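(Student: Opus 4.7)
The plan is to derive both conclusions from Lemma \ref{L::1}, combined with a continuous extension of a suitable quotient across the endpoints. Set $\phi(t):=(t-a)^k(b-t)^{n-k}$, so that the function $p$ of Lemma \ref{L::1} satisfies $p(t)=(-1)^{n-k}\phi(t)$. That lemma then reads $(-1)^{n-k}g_{\bar M}(t,s)\ge 0$ on $I\times I$; for $\sigma\ge 0$ the representation formula $u(t)=\int_a^b g_{\bar M}(t,s)\sigma(s)\,ds$ yields $(-1)^{n-k}u(t)\ge 0$, giving inverse positivity when $n-k$ is even and inverse negativity when $n-k$ is odd.

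For the quantitative estimates I would study the quotient $H(t,s):=g_{\bar M}(t,s)/\phi(t)$. On $(a,b)\times(a,b)$ it is well defined and the strict inequality in Lemma \ref{L::1} gives $(-1)^{n-k}H(t,s)>0$. I would then extend $H$ continuously to $I\times(a,b)$ via Taylor expansion at $t=a$ and $t=b$: property $(g_6)$ forces $\partial_t^j g_{\bar M}(a,s)=0$ for $0\le j\le k-1$, so that
\[
g_{\bar M}(t,s)=(t-a)^k\bigl[\partial_t^k g_{\bar M}(a,s)/k!+O(t-a)\bigr],
\]
which gives $H(a,s)=\partial_t^k g_{\bar M}(a,s)/k!$; an analogous expansion at $t=b$ produces $H(b,s)=(-1)^{n-k}\partial_t^{n-k}g_{\bar M}(b,s)/(n-k)!$. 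The non-vanishing of these boundary values, needed so that $H$ does not degenerate as $t\to a,b$, follows by continuously extending the strict inequality of Lemma \ref{L::1} to the endpoints, and their continuity in $s$ comes from property $(g_2)$.

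Once $H$ is jointly continuous and of constant sign $(-1)^{n-k}$ on $I\times(a,b)$, the envelopes $h_{\min}(s):=\min_{t\in I}H(t,s)$ and $h_{\max}(s):=\max_{t\in I}H(t,s)$ inherit the sign of $H$ and are continuous in $s\in(a,b)$; they remain bounded as $s\to a^+,b^-$ because $g_{\bar M}$ is bounded on $I\times I$ by $(g_2)$. For $n-k$ even, choosing $k_1(s):=\tfrac12 h_{\min}(s)$ and $k_2(s):=2\,h_{\max}(s)$ gives $0<k_1(s)<k_2(s)$, $k_1,k_2\in\mathcal{L}^1(I)$, and
\[
\phi(t)\,k_1(s)\ \le\ \phi(t)\,H(t,s)\ =\ g_{\bar M}(t,s)\ \le\ \phi(t)\,k_2(s),
\]
which is $(P_g)$; the symmetric choice yields $(N_g)$ when $n-k$ is odd. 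The main obstacle is precisely this continuous and non-degenerate extension of $H$ to $t=a,b$, since Lemma \ref{L::1} in isolation only provides a one-sided sign control, whereas $(P_g)$ and $(N_g)$ demand the two-sided sandwich produced by $h_{\min}$ and $h_{\max}$.
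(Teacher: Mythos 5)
The paper itself offers no proof of Lemma \ref{L::2} (it is imported from \cite{CabSaa}), so your argument has to stand on its own. The skeleton is the right one: Lemma \ref{L::1} with $p(t)=(-1)^{n-k}\phi(t)$, $\phi(t)=(t-a)^k(b-t)^{n-k}$, gives the sign of $g_{\bar M}$ and hence inverse positivity/negativity, and the two-sided bounds $(P_g)$, $(N_g)$ should indeed come from controlling $H=g_{\bar M}/\phi$ on all of $I\times(a,b)$ and enveloping it by $h_{\min},h_{\max}$. Two steps, however, are not justified as written. (i) Integrability of $k_1,k_2$: you claim $h_{\min},h_{\max}$ stay bounded as $s\to a^+,b^-$ ``because $g_{\bar M}$ is bounded on $I\times I$''. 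That inference is invalid: boundedness of $g_{\bar M}$ only gives $|H(t,s)|\le C/\phi(t)$, which degenerates as $t\to a,b$ and says nothing about $\sup_{t\in I}|H(t,s)|$. The correct route is a uniform Taylor bound: $(g_6)$ together with the integral form of the remainder gives $|g_{\bar M}(t,s)|\le \frac{(t-a)^k}{k!}\,\sup_{I\times I}|\partial_t^k g_{\bar M}|$, the supremum being finite by $(g_2)$ when $k\le n-2$ (and by $(g_3)$, $(g_5)$ on the closed triangles when $k=n-1$), and symmetrically at $t=b$; combined with $\min\phi>0$ on compact subsets of $(a,b)$ this bounds $H$ uniformly on $I\times(a,b)$, so $h_{\min},h_{\max}$ are bounded and hence in $\mathcal{L}^1(I)$. (ii) Non-degeneracy at $t=a,b$: continuity cannot upgrade strict positivity on $(a,b)\times(a,b)$ to strict positivity on $\{a,b\}\times(a,b)$ --- a positive function may tend to zero at the boundary, and if $H(a,s)=0$ for some $s$ then $k_1(s)=0$ and $(P_g)$ fails. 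What rescues this is that the second inequality of Lemma \ref{L::1} is stated for all $t\in I$, i.e.\ it already asserts strict positivity of the limiting quotient at $t=a,b$; you should invoke it in that form rather than speak of ``continuously extending'' it.

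Two minor points: your formula for $H(a,s)$ omits the factor $(b-a)^{-(n-k)}$ coming from $\phi$, and for $k=n-1$ (resp.\ $k=1$) the continuity in $s$ of $\partial_t^{k} g_{\bar M}(a,s)$ (resp.\ $\partial_t^{n-k}g_{\bar M}(b,s)$) rests on $(g_3)$, not $(g_2)$. With points (i) and (ii) repaired the proof closes: the choices $k_1=\tfrac12 h_{\min}$, $k_2=2h_{\max}$, and their mirror images for $n-k$ odd, do then verify $(P_g)$ and $(N_g)$.
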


		The following result, which appears on \cite[Theorem 3.2]{Neh}, shows a property of the eigenvalues of a disconjugate operator.
		
			\begin{theorem}\label{T::10}
				Let $\bar{M}\in\mathbb{R}$ be such that $T_n[\bar{M}]\,u(t)=0$ is disconjugate on $I$. Then
				\begin{itemize}
					\item If $n-k$ is even, there is not any eigenvalue of $T_n[\bar{M}]$ on $X_k$  such that $\lambda<0$.
					\item If $n-k$ is odd, there is not any eigenvalue of $T_n[\bar{M}]$ on $X_k$  such that $\lambda>0$.
				\end{itemize} 
			\end{theorem}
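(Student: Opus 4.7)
The plan is a contradiction argument via the Krein--Rutman theorem applied to the Green's integral operator. I sketch the case $n-k$ even, the odd case being symmetric after interchanging $(P_g)$ and $(N_g)$.

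Suppose for contradiction that $\lambda_0 < 0$ is an eigenvalue of $T_n[\bar{M}]$ on $X_k$, and set $\Sigma_k := \{M\in\mathbb{R} : T_n[M]\text{ admits a non-trivial }X_k\text{-solution}\}$. Then $\bar{M}-\lambda_0 \in \Sigma_k\cap(\bar{M},+\infty)$; since $\Sigma_k$ is discrete and closed, let $M_\# := \min(\Sigma_k\cap(\bar{M},+\infty))$, and let $\tilde{M}\in(\bar{M},M_\#]$ be maximal such that $T_n[M]$ is disconjugate on $I$ for every $M\in[\bar{M},\tilde{M})$; Proposition \ref{P::1} guarantees $\tilde{M}>\bar{M}$.

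In the favourable case $\tilde{M}=M_\#$, for every $M\in[\bar{M},M_\#)$ Lemma \ref{L::2} yields $(P_g)$ for $g_{M,k}$, so the compact integral operator $G_M u(t):=\int_a^b g_{M,k}(t,s)\,u(s)\,ds$ is strongly positive in Krasnosel'ski\u{\i}'s sense. Krein--Rutman provides a simple principal eigenvalue $\rho(G_M)>0$ attained by a positive eigenfunction, with $|\mu|\le\rho(G_M)$ for every other eigenvalue $\mu$. The spectrum of $T_n[M]$ on $X_k$ equals $\{M-M' : M'\in\Sigma_k\}$, its principal positive eigenvalue is $\Lambda_1(M)=M-M^-$ where $M^-$ is the largest element of $\Sigma_k$ strictly below $\bar{M}$ (its existence forced by Krein--Rutman at $M=\bar{M}$, which produces a positive eigenvalue of $T_n[\bar{M}]$ on $X_k$), so $\rho(G_M)=(M-M^-)^{-1}$ stays bounded as $M\nearrow M_\#$. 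On the other hand, $M_\#\in\Sigma_k$ contributes the eigenvalue $(M-M_\#)^{-1}<0$ of $G_M$ whose modulus blows up; the Krein--Rutman bound $(M_\#-M)^{-1}\le(M-M^-)^{-1}$ rearranges to $M_\#-M^-\le 0$ in the limit $M\to M_\#^-$, contradicting $M^-<\bar{M}<M_\#$.

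If instead $\tilde{M}<M_\#$, Propositions \ref{P::3}--\ref{P::4} furnish a least index $j_\ast\in\{1,\dots,n-1\}$ with $W^n_{j_\ast}[\tilde{M}](b)=0$, so $\tilde{M}\in\Sigma_{n-j_\ast}$ with $n-j_\ast\ne k$ by the minimality of $M_\#$. When $j_\ast$ is even, the same Krein--Rutman comparison applied to $g_{M,n-j_\ast}$ on $[\bar{M},\tilde{M})$ (which satisfies $(P_g)$ by Lemma \ref{L::2}) reproduces the contradiction at $M\to\tilde{M}^-$. The main obstacle is the sub-case $j_\ast$ odd: there $g_{M,n-j_\ast}$ satisfies $(N_g)$, and the eigenvalue contributed by $\tilde{M}$ coincides with the principal (negative) eigenvalue of $G_{M,n-j_\ast}$ --- equivalently, with the Krein--Rutman principal eigenvalue of $-G_{M,n-j_\ast}$, which satisfies $(P_g)$ --- rather than being an anomalous one, so the direct spectral-radius comparison collapses. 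Overcoming this requires isolating a second, anomalous positive eigenvalue of $T_n[M]$ on $X_{n-j_\ast}$ (arising from an element of $\Sigma_{n-j_\ast}\cap(-\infty,\bar{M})$) against which the Krein--Rutman bound for $-G_{M,n-j_\ast}$ can be pitted; carrying out this parity-tracking across iterated disconjugacy failures, perhaps by induction on $n$ or $k$, is the technical heart of the argument.
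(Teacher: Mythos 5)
First, a point of comparison: the paper does not prove this statement at all --- Theorem \ref{T::10} is quoted from \cite[Theorem 3.2]{Neh} --- so there is no internal proof to measure your argument against. Judged on its own terms, your argument is correct but only in the favourable case $\tilde{M}=M_\#$: there the identification $\rho(G_M)=(M-M^-)^{-1}$ via the $u_0$-positivity supplied by $(P_g)$, combined with the trivial bound $(M_\#-M)^{-1}\le\rho(G_M)$, does force $M_\#\le M^-$ in the limit, which is absurd. But the proof as a whole is incomplete, and you say so yourself. The sub-case in which disconjugacy is first lost through a space $X_{n-j_\ast}$ of the opposite parity ($j_\ast$ odd) is precisely the hard case: for $M\in(\tilde{M},M_\#)$ you no longer have disconjugacy, hence Lemma \ref{L::2} gives no sign information on $g_{M,k}$, and the Krein--Rutman comparison cannot be continued up to $M_\#$. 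Announcing that this ``is the technical heart of the argument'' does not discharge it; as written, the contradiction is obtained only when the first element of $\bigcup_{k'}\Sigma_{k'}$ above $\bar{M}$ happens to have $n-k'$ even.

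There is a second, unflagged gap earlier in the same branch. When disconjugacy on $[a,b]$ first fails at $\tilde{M}$, Propositions \ref{P::3}--\ref{P::4} produce a Wronskian $W^n_{j}[\tilde{M}]$ vanishing at the first conjugate point $c=\eta_{\tilde{M}}(a)\le b$, not necessarily at $b$: since $W^n_j[M](t)>0$ for $t\in(a,b]$ and all $M<\tilde{M}$, the limiting Wronskian may acquire a double \emph{interior} zero at some $c<b$, in which case $\tilde{M}$ belongs to no $\Sigma_{k'}$ and your dichotomy collapses. Excluding this scenario requires the double-zero argument with the two Green's functions $g_{\bar{M},n-\ell}$ and $g_{\bar{M},n-\ell-1}$ of opposite constant signs, which is exactly the device the authors use in the proof of Theorem \ref{T::aut}. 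More fundamentally, $(P_g)$ together with Krein--Rutman can never by itself exclude \emph{all} wrong-sign eigenvalues: a positive kernel may have negative eigenvalues of smaller modulus. The full strength of Theorem \ref{T::10} rests on the deeper fact that $(-1)^{n-k}g_{\bar{M},k}$ is an oscillation (totally positive) kernel for a disconjugate operator, so that by the Gantmacher--Krein theory all of its eigenvalues are positive and simple; that fact, or Nehari's own continuation argument, is the ingredient your proposal is missing.
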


		Next two following results, see \cite[Section 1.8]{Cab}, ensure the existence of eigenvalues in different cases
		
			\begin{theorem}
				Let $\bar{M}\in \mathbb{R}$ be fixed. If operator $T_n[\bar{M}]$ is invertible in $X_k$ and its related Green's function satisfies condition $(P_g)$, then there exists $\lambda_1>0$, the least eigenvalue in absolute value of operator $T_n[\bar{M}]$ in $X_k$. Moreover, there exist a nontrivial constant sign eigenfunction corresponding to the eigenvalue $\lambda_1$.
			\end{theorem}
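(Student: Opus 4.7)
The plan is to recast the eigenvalue problem for $T_n[\bar M]$ as a spectral problem for the associated integral operator, and then invoke a Krein-Rutman/Krasnoselskii type theorem for strongly positive compact operators. Since $T_n[\bar M]$ is invertible on $X_k$, for every $h\in C(I)$ the unique solution $u\in X_k$ of $T_n[\bar M]u=h$ is given by
\begin{equation*}
(Gh)(t):=\int_a^b g_{\bar M}(t,s)\,h(s)\,ds,\qquad t\in I.
\end{equation*}
Standard properties of Green's functions (continuity of $g_{\bar M}$ on $I\times I$, as stated in $(g_2)$) together with Arzel\`a--Ascoli show that $G\colon C(I)\to C(I)$ is linear and compact, and that $\lambda\in\R\setminus\{0\}$ is an eigenvalue of $T_n[\bar M]$ (with eigenfunction in $X_k$) if and only if $1/\lambda$ is a nonzero eigenvalue of $G$ with the same eigenfunction.

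Next I would exploit hypothesis $(P_g)$ to set up a cone structure. Let $K\subset C(I)$ be the cone of nonnegative continuous functions. For every $h\in K\setminus\{0\}$, the double estimate
\begin{equation*}
\phi(t)\int_a^b k_1(s)\,h(s)\,ds\;\le\;(Gh)(t)\;\le\;\phi(t)\int_a^b k_2(s)\,h(s)\,ds
\end{equation*}
together with $0<k_1(s)<k_2(s)$ a.e.\ and $\phi(t)>0$ on $(a,b)$ shows that $G(K)\subset K$ and, more strongly, that $G$ is $\phi$-positive in the sense of Krasnoselskii: for each $h\in K\setminus\{0\}$ there exist positive constants $\alpha_h,\beta_h$ with $\alpha_h\,\phi(t)\le (Gh)(t)\le\beta_h\,\phi(t)$ on $I$.

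Now I would apply the Krein-Rutman theorem (equivalently, Krasnoselskii's theorem on $u_0$-positive compact operators). This yields a positive eigenvalue $\mu_1>0$ of $G$ equal to its spectral radius, together with a nontrivial eigenfunction $\varphi\in K$ satisfying $G\varphi=\mu_1\,\varphi$, so that $\varphi$ has constant sign on $I$. Setting $\lambda_1:=1/\mu_1>0$ gives $T_n[\bar M]\varphi=\lambda_1\varphi$. Finally, since $\mu_1$ equals the spectral radius of $G$, every other nonzero eigenvalue $\mu$ of $G$ satisfies $|\mu|\le\mu_1$; translating back, every eigenvalue $\lambda\neq\lambda_1$ of $T_n[\bar M]$ on $X_k$ satisfies $|\lambda|\ge\lambda_1$, which is exactly the minimality of $|\lambda_1|$ claimed.

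The main obstacle I expect is the functional-analytic verification that $G$ is $u_0$-positive in the appropriate sense so that Krein-Rutman--type conclusions apply; condition $(P_g)$ is tailor-made for this, providing both a positive lower bound (which prevents the eigenfunction from being trivial and ensures it is comparable to $\phi$) and a uniform upper bound (which, combined with continuity of $g_{\bar M}$, underlies the compactness of $G$). Once this positivity/compactness framework is in place, the rest of the argument is a direct application of the abstract theorem.
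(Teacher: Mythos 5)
This theorem is not proved in the paper at all: it is quoted verbatim from \cite[Section 1.8]{Cab}, and the argument given there is precisely the one you outline --- pass to the compact integral operator with kernel $g_{\bar M}$, use $(P_g)$ to obtain $u_0$-positivity with $u_0=\phi$ (note $G\phi\ge c\,\phi$ with $c=\int_a^b k_1(s)\phi(s)\,ds>0$, which guarantees $r(G)>0$ before invoking Krein--Rutman), and translate the dominant eigenvalue $\mu_1=r(G)$ back to $\lambda_1=1/\mu_1$. Your proposal is correct and takes essentially the same route; the only detail worth adding is that the eigenfunction $\varphi=\mu_1^{-1}G\varphi$ automatically lies in $X_k$ because $G$ maps $C(I)$ into $X_k$.
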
	
			
		\begin{theorem}
			Let $\bar{M}\in \mathbb{R}$ be fixed. If operator $T_n[\bar{M}]$ is invertible in $X_k$ and its related Green's function satisfies condition $(N_g)$, then there exists $\lambda_2<0$, the least eigenvalue in absolute value of operator $T_n[\bar{M}]$ in $X_k$. Moreover, there exist a nontrivial constant sign eigenfunction corresponding to the eigenvalue $\lambda_2$.
		\end{theorem}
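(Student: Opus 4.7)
The plan is to reduce to the positive-kernel case handled in the theorem stated immediately before. The key observation is that condition $(N_g)$ for $g_{\bar M}$ translates into condition $(P_g)$ for $-g_{\bar M}$: multiplying the chain of inequalities in $(N_g)$ by $-1$ yields
\[\phi(t)(-k_2(s))\le -g_{\bar M}(t,s)\le \phi(t)(-k_1(s)),\]
with $0<-k_2(s)<-k_1(s)$ a.e.\ on $I$, so $\tilde k_1:=-k_2$ and $\tilde k_2:=-k_1$ witness $(P_g)$ for the kernel $-g_{\bar M}$ with the same positive function $\phi$.

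Next, I would consider the operator $-T_n[\bar M]$ on $X_k$. Since the Green's function is uniquely determined by properties $(g_1)$--$(g_6)$ together with the operator and the boundary conditions, the Green's function of $-T_n[\bar M]$ on $X_k$ is exactly $-g_{\bar M}$; moreover $-T_n[\bar M]$ is invertible if and only if $T_n[\bar M]$ is. Applying the previous theorem, whose argument depends only on invertibility together with $(P_g)$ of the associated Green's function, to $-T_n[\bar M]$ yields a least-in-absolute-value positive eigenvalue $\lambda_1>0$ of $-T_n[\bar M]$ on $X_k$, together with a nontrivial constant-sign eigenfunction $u$ satisfying $-T_n[\bar M]\,u=\lambda_1\, u$.

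Finally, setting $\lambda_2:=-\lambda_1$, the identity $T_n[\bar M]\,u=\lambda_2\, u$ exhibits $u$ as a nontrivial constant-sign eigenfunction of $T_n[\bar M]$ on $X_k$ with eigenvalue $\lambda_2<0$. Because $\mu\mapsto -\mu$ is a bijection between the spectra of $T_n[\bar M]$ and $-T_n[\bar M]$ preserving absolute values, the minimality of $|\lambda_1|$ among the nonzero eigenvalues of $-T_n[\bar M]$ transfers immediately to the minimality of $|\lambda_2|$ among those of $T_n[\bar M]$.

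The main point requiring care is justifying the invocation of the preceding theorem for the operator $-T_n[\bar M]$, which is not literally of the form $T_n[M]$ for any $M\in\mathbb R$. This is nevertheless legitimate, because the proof of that theorem uses only the compactness of the integral operator with kernel $g_{\bar M}$ on $X_k$ together with the strict positivity encoded in $(P_g)$, both of which are preserved when $g_{\bar M}$ is replaced by $-g_{\bar M}$.
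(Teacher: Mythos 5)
Your reduction is correct. Bear in mind that the paper itself offers no proof of this statement: both this theorem and its $(P_g)$ companion are quoted from \cite[Section 1.8]{Cab}, where they are obtained from Krein--Rutman-type results for compact operators that are positive with respect to a cone, so there is no in-paper argument to compare against. Your sign-flip reduction of the $(N_g)$ case to the $(P_g)$ case is the natural one: the verification that $-g_{\bar M}$ satisfies $(P_g)$ with $\tilde k_1=-k_2$, $\tilde k_2=-k_1$ is right, and the transfer of the ``least in absolute value'' property under the bijection $\mu\mapsto-\mu$ between the spectra of $T_n[\bar M]$ and $-T_n[\bar M]$ is handled correctly. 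Two minor points. First, your assertion that $-g_{\bar M}$ is \emph{the} Green's function of $-T_n[\bar M]$ in the sense of $(g_1)$--$(g_6)$ is not quite literal, since $(g_5)$ normalizes the jump of the $(n-1)$-st derivative to $+1$, which presupposes a monic leading coefficient, whereas $-g_{\bar M}$ has jump $-1$; what is true, and what you actually use, is that the inverse of $-T_n[\bar M]$ on $X_k$ is the integral operator with kernel $-g_{\bar M}(t,s)$. Second, you can sidestep entirely the concern you raise about applying the preceding theorem to an operator not of the form $T_n[M]$ by arguing at the level of the compact integral operator $Ku(t)=\int_a^b g_{\bar M}(t,s)\,u(s)\,ds$: the eigenvalues of $T_n[\bar M]$ in $X_k$ are the reciprocals of the nonzero eigenvalues of $K$, and $-K$ has kernel satisfying $(P_g)$, so the $(P_g)$ result (or Krein--Rutman directly) applies to $-K$ verbatim. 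With either phrasing the argument is complete.
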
	
			
	Finally, we introduce the following sets, which characterize the intervals of constant sign for $g_M(t,s)$.
	\begin{eqnarray}
	\nonumber	P_T&=& \left\lbrace M\in \mathbb{R}\,,\ \mid \quad g_M(t,s)\geq 0\quad \forall \quad(t,s)\in I\times I\right\rbrace, \\\nonumber
		N_T&=& \left\lbrace M\in \mathbb{R}\,,\ \mid \quad g_M(t,s)\leq 0\quad \forall\quad  (t,s)\in I\times I\right\rbrace.
	\end{eqnarray}
	
	Next results describe the structure of the two previous parameter's set, see \cite[Section 1.8]{Cab}
	\begin{theorem}\label{T::6}
		Let $\bar{M}\in\mathbb{R}$ be fixed. Suppose that operator $T_n[\bar{M}]$ is invertible on $X_k$, its related Green's function is nonnegative on $I \times I$, it satisfies condition ($P_g$), and the set $P_T$ is bounded from above.
		Then $P_T=(\bar{M}-\lambda_1,\bar{M}-\bar{\mu}]$, with $\lambda_1>0$ the least positive eigenvalue of operator $T_n[\bar{M}]$ in $X_k$ and $\bar{\mu}<0$ such that $T_n[\bar{M}-\bar{\mu}]$ is invertible in $X_k$ and the related nonnegative Green's function $g_{\bar{M}-\bar{\mu}}$ vanishes at some points on the square $I\times I$.
	\end{theorem}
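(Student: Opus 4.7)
My plan is to identify the two endpoints of $P_T$ separately and then argue the set fills the interval between them.

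\emph{Lower endpoint.} I first show $P_T\subseteq(\bar{M}-\lambda_1,+\infty)$. Given $M\in P_T$, the integral operator with kernel $g_M\geq 0$ is compact and positive, so by the theorem cited just before the statement (applied to $T_n[M]$) there is a principal value $1/\Lambda(M)>0$ with a nontrivial nonnegative eigenfunction $\varphi_M$. Since $T_n[M]=T_n[\bar{M}]-(\bar{M}-M)\,I$, the number $\lambda^{*}:=\Lambda(M)+\bar{M}-M$ is an eigenvalue of $T_n[\bar{M}]$ with eigenfunction $\varphi_M$. Using the integral representation $\varphi_M(t)=\lambda^{*}\int_a^b g_{\bar{M}}(t,s)\,\varphi_M(s)\,ds$ together with the bound $g_{\bar{M}}(t,s)\geq\phi(t)\,k_1(s)$ from $(P_g)$, one gets $\varphi_M(t)\geq\lambda^{*}\phi(t)\int_a^b k_1(s)\,\varphi_M(s)\,ds$, so $\varphi_M$ is strictly positive on $(a,b)$. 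Uniqueness of the principal positive eigenfunction of $T_n[\bar{M}]$ then forces $\lambda^{*}=\lambda_1$, whence $\Lambda(M)=M-(\bar{M}-\lambda_1)>0$, i.e.\ $M>\bar{M}-\lambda_1$.

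\emph{Upper endpoint.} Let $M_+:=\sup P_T$, finite by hypothesis. Since $(P_g)$ provides a strictly positive lower envelope for $g_{\bar{M}}$, small upward perturbations of $\bar{M}$ preserve nonnegativity, so $M_+>\bar{M}$. Pick $M_n\uparrow M_+$ in $P_T$. I claim $T_n[M_+]$ is invertible: otherwise $\bar{M}-M_+<0$ would be an eigenvalue of $T_n[\bar{M}]$, the norms $\|g_{M_n}\|$ would blow up, and normalizing and passing to the limit would yield a nonnegative eigenfunction of $T_n[\bar{M}]$ at a \emph{negative} eigenvalue, contradicting the lower-endpoint argument. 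Hence $g_{M_+}=\lim g_{M_n}\geq 0$ by continuity, so $M_+\in P_T$. Moreover, $g_{M_+}$ must vanish somewhere on $I\times I$: otherwise compactness would yield a $(P_g)$-type lower bound for it, and the same perturbation argument would propagate nonnegativity strictly beyond $M_+$, contradicting the definition of the supremum. Setting $\bar{\mu}:=\bar{M}-M_+<0$ gives the upper endpoint.

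\emph{Filling the interval.} On the invertibility region around $\bar{M}$ the map $M\mapsto g_M(t,s)$ is continuous, so $P_T$ is relatively closed there. Relative openness of $P_T$ in the leftward direction at any interior $M_0\in P_T$ follows by re-running the Krein-Rutman upgrade of the first step on $g_{M_0}$: one obtains a $(P_g)$-type bound for $g_{M_0}$ and deduces that $g_M\geq 0$ persists for $M$ slightly smaller than $M_0$. Combining relative closedness and relative openness with the endpoint identifications, the connected component of $P_T$ containing $\bar{M}$ is exactly $(\bar{M}-\lambda_1,\bar{M}-\bar{\mu}]$, and the lower-endpoint argument excludes any points outside this interval.

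\emph{Main obstacle.} The subtle step throughout is upgrading the mere pointwise bound $g_M\geq 0$, valid at a generic $M\in P_T$, to a strict $(P_g)$-type envelope. This upgrade is needed both to apply the Krein-Rutman uniqueness in the lower-endpoint step and to establish the robustness that propagates nonnegativity leftward; the only tool available is the assumed $(P_g)$ bound at $\bar{M}$, which has to be transferred to $g_M$ via the integral equation satisfied by the principal eigenfunction $\varphi_M$ and the resolvent identity linking $g_M$ to $g_{\bar{M}}$.
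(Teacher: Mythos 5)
First, note that the paper itself offers no proof of this theorem: it is quoted verbatim from \cite[Section 1.8]{Cab} (together with Theorem \ref{T::7}), so there is no internal argument to compare against. Judged on its own merits, your proposal follows the standard route (resolvent identity $g_M=g_{\bar M}+(\bar M-M)\int g_{\bar M}(t,r)g_M(r,s)\,dr$, Neumann series, Krein--Rutman), but it contains a genuine gap that you yourself flag and then do not close: the upgrade from $g_M\geq 0$ to a $(P_g)$-type envelope for $g_M$. For $M\leq\bar M$ this upgrade is easy (the resolvent identity gives $g_M\geq g_{\bar M}\geq\phi k_1$ and a matching upper bound), but for $M>\bar M$ the same identity only yields $g_M\geq \phi(t)\bigl[k_1(s)-(M-\bar M)\int k_2(r)g_M(r,s)\,dr\bigr]$, whose bracket has no reason to stay positive. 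Since the right endpoint $\bar M-\bar\mu$ lies strictly above $\bar M$, this is exactly the regime where you need the upgrade --- both to ``re-run the Krein--Rutman step'' at interior points $M_0>\bar M$ and, crucially, for the claim that $g_{M_+}$ must vanish somewhere. There the argument ``otherwise compactness would yield a $(P_g)$-type lower bound'' fails as stated: $g_{M_+}(t,s)$ \emph{always} vanishes at $t=a$ and $t=b$ because of the boundary conditions, so strict interior positivity plus compactness does not produce a bound of the form $\phi(t)k_1(s)$ with $k_1>0$; one needs control of the normalized kernel $g_{M_+}(t,s)/p(t)$ up to the boundary (the content of Lemma \ref{L::1}), which your sketch never establishes for $M_+$.

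Two further remarks. The lower endpoint can be obtained much more cheaply than you do, and without any spectral information about $T_n[M]$: if $M\in P_T$ and $M\leq\bar M-\lambda_1$, apply $T_n[M]=T_n[\bar M]-(\bar M-M)I$ to the principal eigenfunction $\varphi_1>0$ of $T_n[\bar M]$ to get $T_n[M]\varphi_1=(\lambda_1+M-\bar M)\varphi_1$ with nonpositive coefficient; inverting with the nonnegative kernel $g_M$ forces $\varphi_1\leq 0$, a contradiction. This avoids having to invoke the eigenvalue-existence theorem for $T_n[M]$ (which requires the unproved $(P_g)$ for $g_M$). Similarly, leftward persistence of nonnegativity at any $M_0\in P_T$ follows directly from the Neumann series $g_M=\sum_k(M_0-M)^kG_{M_0}^{k+1}$ with all terms nonnegative, with no $(P_g)$ upgrade needed. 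Finally, your invertibility argument at $M_+$ (``the norms would blow up, normalize and pass to the limit'') is only a sketch; the cleaner route is to observe that $M\mapsto g_M$ is pointwise nonincreasing on $P_T$ and bounded below by $0$, so $g_{M_n}$ converges to a bona fide Green's function for $T_n[M_+]$.
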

	
	\begin{theorem}\label{T::7}
		Let $\bar{M}\in\mathbb{R}$ be fixed. Suppose that operator $T_n[\bar{M}]$ is invertible in $X_k$, its related Green's function is nonpositive on $I\times I$, it satisfies condition ($N_g$), and the set $N_T$ is bounded from below.
		Then $N_T=[\bar{M}-\bar{\mu},\bar{M}-\lambda_2)$, with $\lambda_2<0$ the biggest negative eigenvalue of operator $T_n[\bar{M}]$ in $X_k$ and $\bar{\mu}>0$ such that $T_n[\bar{M}-\bar{\mu}]$ is invertible in $X_k$ and the related nonpositive Green's function $g_{\bar{M}-\bar{\mu}}$ vanishes at some points on the square $I\times I$.
	\end{theorem}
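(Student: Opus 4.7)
This statement is the mirror of Theorem \ref{T::6}, with the roles of nonnegativity, $(P_g)$ and the first positive eigenvalue $\lambda_1$ replaced by nonpositivity, $(N_g)$ and the largest negative eigenvalue $\lambda_2$. My plan is to carry out the same three-step argument with sign changes: locate the right endpoint via the spectrum, propagate nonpositivity of $g_M$ by continuity, and use the boundedness of $N_T$ to pin down the left endpoint.

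For the right endpoint, I observe that $T_n[\bar{M}-\lambda_2]=T_n[\bar{M}]-\lambda_2\,I$, so its kernel on $X_k$ is exactly the $\lambda_2$-eigenspace of $T_n[\bar{M}]$ and is nontrivial by the existence of $\lambda_2$. Hence $T_n[\bar{M}-\lambda_2]$ is not invertible on $X_k$ and $\bar{M}-\lambda_2\notin N_T$. For $M\in(-\infty,\bar{M}-\lambda_2)$ the shift $\bar{M}-M$ lies in $(\lambda_2,+\infty)$; combining Theorem \ref{T::10} (which under $(N_g)$ corresponds via Lemma \ref{L::2} to $n-k$ odd and excludes positive eigenvalues), the invertibility of $T_n[\bar{M}]$ (excluding $0$), and the maximality of $\lambda_2$ among negative eigenvalues, $\bar{M}-M$ is not an eigenvalue. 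Consequently $T_n[M]$ is invertible throughout this half-line and $g_M$ depends continuously on $M$, uniformly on compacts of $I\times I$ off the diagonal.

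For the left endpoint, boundedness from below of $N_T$ allows me to set $\bar{\mu}:=\bar{M}-\inf N_T$. Since $g_{\bar{M}}$ satisfies $(N_g)$ strictly by hypothesis, continuous dependence of $g_M$ in $M$ places an open neighborhood of $\bar{M}$ inside $N_T$, so $\bar{\mu}>0$. Propagating nonpositivity downward, as long as $g_M<0$ on $I\times(a,b)$ the strict sign persists in a neighborhood of $M$; the downward extension therefore continues until $g_M$ first vanishes somewhere on $I\times I$. A uniform-limit argument places the infimum $\bar{M}-\bar{\mu}$ inside $N_T$ (included endpoint), where by construction $g_{\bar{M}-\bar{\mu}}$ is nonpositive and vanishes at some point of $I\times I$, as required.

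The main obstacle is the continuation step: rigorously showing that nonpositivity of $g_M$ cannot break down inside the would-be interval except through an interior zero of the Green's function (defining the left endpoint) or through loss of invertibility (capping the right endpoint at $\bar{M}-\lambda_2$). This requires careful use of the continuous dependence of $g_M$ on $M$, of Lemma \ref{L::1} to ensure strict sign away from the boundary whenever disconjugacy is still available, and of Theorem \ref{T::10} together with the characterization of $\lambda_2$ as the least negative eigenvalue in absolute value to rule out any extra spectral crossing strictly inside $(\bar{M}-\bar{\mu},\bar{M}-\lambda_2)$.
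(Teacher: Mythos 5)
First, a point of reference: the paper does not prove Theorem \ref{T::7} at all --- it is quoted as a known structural result from \cite[Section 1.8]{Cab}, so there is no internal proof to compare against and your attempt has to stand on its own. As it stands it has genuine gaps. The most immediate one is that you repeatedly invoke Theorem \ref{T::10} and Lemma \ref{L::1}, both of which presuppose that $T_n[\bar M]\,u=0$ is \emph{disconjugate}; disconjugacy is not among the hypotheses of Theorem \ref{T::7} (only invertibility, nonpositivity of $g_{\bar M}$, condition $(N_g)$ and boundedness of $N_T$). Consequently your exclusion of positive eigenvalues, and hence your claim that $T_n[M]$ is invertible on all of $(-\infty,\bar M-\lambda_2)$, is unjustified; in fact positive eigenvalues may well exist, and controlling them (showing they cannot enter $(0,\bar\mu)$) is part of what a correct proof must do.

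The second, more structural, gap is that you never actually establish either half of the identity at the right end. You show $\bar M-\lambda_2\notin N_T$, but not that every $M\in[\bar M,\bar M-\lambda_2)$ \emph{is} in $N_T$, nor that every $M>\bar M-\lambda_2$ is \emph{not}. The standard tools are exactly the ones missing from your sketch: (i) a comparison/monotonicity argument --- writing $K=-G_{\bar M}$ (a positive operator by $(N_g)$) and $u=-\bigl(I-(M-\bar M)K\bigr)^{-1}K\sigma$, the Neumann series $\sum_{j\ge0}(M-\bar M)^jK^j$ is a positive operator precisely for $0\le M-\bar M<1/r(K)=-\lambda_2$, which yields $g_M\le0$ up to $\bar M-\lambda_2$ and, via $g_{M_2}\le g_{M_1}$ for $M_1<M_2$, explains why the interval is closed on the left but open on the right (increasing $M$ pushes $g_M$ away from zero, so it can only touch zero as $M$ decreases); and (ii) the constant-sign eigenfunction $\varphi$ of $\lambda_2$: for $M>\bar M-\lambda_2$ one has $T_n[M]\varphi=(M-\bar M+\lambda_2)\varphi$ of constant sign, so $g_M\le0$ would force $\varphi$ to have the opposite sign and hence vanish, excluding all such $M$ from $N_T$. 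Finally, the continuation step you yourself flag as ``the main obstacle'' is the crux and is left unproven; note in particular that ``strict sign persists in a neighborhood of $M$'' cannot follow from plain continuity and compactness, because $g_M$ necessarily vanishes on part of the boundary of $I\times I$ (where the boundary conditions force zeros) --- this is exactly why the two-sided bounds in $(N_g)$, rather than pointwise negativity, must be propagated.
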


	\section{Characterization of disconjugacy}
	
	This section is devoted to prove de main result of this paper. The result is the following.
	\begin{theorem}\label{T::aut}
		Let $\bar{M}\in\mathbb{R}$ and $n\geq 2$ be such that $T_n[\bar{M}]\,u(t)=0$ is a disconjugate equation on $I$. Then, $T_n[M]\,u(t)=0$ is a disconjugate equation on $I$ if, and only if, $M\in(\bar{M}-\lambda_1,\bar{M}-\lambda_2)$, where
		\begin{itemize}
			\item  $\lambda_1=+\infty$ if $n=2$ and, for $n>2$, $\lambda_1>0$ is the minimum of the least positive eigenvalues on $T_n[\bar{M}]$ in $X_k$, with $n-k$  even.
			\item $\lambda_2<0$ is the maximum of the biggest negative eigenvalues on $T_n[\bar{M}]$ in $X_k$, with $n-k$  odd.
		\end{itemize}
	\end{theorem}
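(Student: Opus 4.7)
\emph{Setup and reduction.} I will prove that $\mathcal D:=\{M\in\mathbb R:T_n[M]\,u=0\text{ is disconjugate on }I\}$ is an open interval whose endpoints are $\bar M-\lambda_1$ and $\bar M-\lambda_2$. By Proposition~\ref{P::1}, $\mathcal D$ is open, and by hypothesis $\bar M\in\mathcal D$. Since $T_n[M]=T_n[M']+(M-M')\,\mathrm{Id}$ for every $M'$, a nontrivial solution of $T_n[M]\,u=0$ satisfying $(k,n-k)$ boundary conditions on some $[a,c]\subseteq I$ exists iff $M'-M$ is an eigenvalue of $T_n[M']$ on the analogue $X_k^c$ of $X_k$ for $[a,c]$; Proposition~\ref{P::3} rephrases this as $W^n_{n-k}[M](c)=0$. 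When $M'\in\mathcal D$, $T_n[M']$ is disconjugate on $[a,c]$, so Theorem~\ref{T::10} forbids such vanishing unless $M\le M'$ (when $n-k$ is even) or $M\ge M'$ (when $n-k$ is odd).

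\emph{Convexity of $\mathcal D$.} Take $M_1<M_2$ in $\mathcal D$ and $M\in(M_1,M_2)$. If $W^n_{n-k}[M](c)=0$ for some $c\in(a,b]$ and some $k$, then applying the reduction with $M'=M_1$ forces $n-k$ odd (since $M>M_1$), while applying it with $M'=M_2$ forces $n-k$ even (since $M<M_2$); contradiction. Hence no Wronskian vanishes on $(a,b]$, so by Proposition~\ref{P::4}, $M\in\mathcal D$. Thus $\mathcal D$ is convex and open, i.e.\ $\mathcal D=(M_-,M_+)$ with $-\infty\le M_-<\bar M<M_+\le+\infty$.

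\emph{Identification of the endpoints.} By definition, $\lambda_2$ is attained as the largest negative eigenvalue of $T_n[\bar M]$ in some $X_{k_0}$ with $n-k_0$ odd; an associated eigenfunction $\psi\in X_{k_0}$ satisfies $T_n[\bar M-\lambda_2]\psi=0$ and has $n$ zeros on $[a,b]$ counted with multiplicity, so $T_n[\bar M-\lambda_2]$ is not disconjugate, giving $M_+\le\bar M-\lambda_2$. For the reverse, assume $M_+<\infty$: openness of $\mathcal D$ yields $\eta_{M_+}(a)\le b$, and continuity of $M\mapsto\eta_M(a)$ (a standard consequence in \cite[Chap.~3]{Cop} of joint continuity of the Wronskians combined with the non-degeneracy supplied by Proposition~\ref{P::2}), together with $\eta_M(a)>b$ on $(M_-,M_+)$, forces $\eta_{M_+}(a)=b$. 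Propositions~\ref{P::4} and~\ref{P::2} then produce a nontrivial $\psi\in X_{n-j_0}$ with $T_n[M_+]\psi=0$, and the reduction identifies $\bar M-M_+$ as a negative eigenvalue of $T_n[\bar M]$ in $X_{n-j_0}$; Theorem~\ref{T::10} forces $j_0$ odd, and the maximality of $\lambda_2$ gives $\bar M-M_+\le\lambda_2$, hence $M_+=\bar M-\lambda_2$. Symmetrically $M_-=\bar M-\lambda_1$. For $n=2$ only $k=1$ (with $n-k$ odd) is in range, so the defining set of $\lambda_1$ is empty, the convention $\lambda_1=+\infty$ is consistent, and the symmetric argument yields no obstruction from the left.

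\emph{Expected obstacle.} The one non-trivial technical input is the continuity of $M\mapsto\eta_M(a)$ at $M_+$. Lower semicontinuity follows easily from joint continuity of the Wronskians, but upper semicontinuity requires ruling out tangential vanishing of some $W^n_j[M_+]$ at its first zero. This is where Proposition~\ref{P::2} is essential, since uniqueness up to scalar and non-vanishing on $(a,b)$ of the corresponding eigenfunction keep the first Wronskian zero simple, hence persistent under small perturbations of $M$.
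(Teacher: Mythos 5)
Your overall architecture is close to the paper's, and parts of it are arguably cleaner: the convexity argument (applying Theorem~\ref{T::10} on subintervals $[a,c]$ to two reference points $M_1<M<M_2$) is a more careful substitute for the paper's appeal to Proposition~\ref{P::1}, and deducing $M_+\le\bar M-\lambda_2$ directly from the eigenfunction having $n$ zeros is correct. The genuine gap is exactly where you flag it, and your proposed fix does not work. You claim that Proposition~\ref{P::2} keeps the first zero of the relevant Wronskian simple because the extremal solution is unique up to a scalar and does not vanish on the open interval. But simplicity of that Wronskian zero at $c=\eta_{M_+}(a)$ is equivalent, via the derivative formula for the Wronskian (equation~\eqref{Ec::dw} in the paper), to the extremal solution $y$ having a zero of multiplicity exactly $j$ at the \emph{endpoint} $c$, i.e.\ to $y^{(j)}(c)\neq0$. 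Proposition~\ref{P::2} controls only interior zeros of $y$ on $(a,c)$ and says nothing about the multiplicity at $c$, so tangential vanishing of $W^n_j[M_+]$ at some $c<b$ is not excluded by anything you invoke. Without excluding it you cannot upgrade $\eta_{M_+}(a)\le b$ to $\eta_{M_+}(a)=b$, and that step is the heart of the theorem.

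The paper's proof is devoted precisely to this case. When $c<b$, the fact that $W^n_j[M]$ does not vanish on $(a,b]$ for $M$ just below $M_+$ forces $\tfrac{\partial}{\partial t}W^n_j[M_+](t)\big|_{t=c}=0$, and \eqref{Ec::dw} then shows that the extremal solution $y$ satisfies \emph{both} the $(n-j,j)$ and the $(n-j-1,j+1)$ boundary conditions on $[a,c]$. Writing $T_n[\bar M]\,y=(\bar M-M_+)\,y$ and inverting with the two Green's functions $g_{\bar M,n-j}$ and $g_{\bar M,n-j-1}$ on $[a,c]$ — which by Lemma~\ref{L::1} have \emph{opposite} constant signs on $[a,c]\times(a,c)$, since $(-1)^{j}$ and $(-1)^{j+1}$ differ — yields two integral representations of $y$ with opposite signs, contradicting the one-signedness of $y$ on $(a,c)$ supplied by Proposition~\ref{P::2}. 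This double-representation argument is the missing ingredient in your write-up; with it inserted in place of your appeal to "continuity of $M\mapsto\eta_M(a)$", your proof closes.
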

	\begin{proof}

		Let $n>2$. First, we are going to see that the optimal interval of disconjugation, $D_{\bar{M}}$, must necessarily be a subset of $(\bar{M}-\lambda_1,\bar{M}-\lambda_2)$.
		
		Using, Lemma \ref{L::1}, it is known that if $M\in D_{\bar{M}}$, Green's function related to operator $T_n[M]$ in $X_k$ is of constant sign, positive if $n-k$ is even and negative if $n-k$ is odd. 
		
		Let $\widehat{k}\in\left\lbrace 1,\dots,n-1\right\rbrace $ be such that $n-\widehat{k}$ is even and $\lambda_1$ attained as the least positive eigenvalue of $T_n[\bar{M}]$ in $X_{\widehat{k}}$. Using Lemma \ref{L::2} and Theorem \ref{T::6}, we can affirm that $g_{\widehat{M},\widehat{k}}$ changes sign on $I\times I$ for $\widehat{M}\leq\bar{M}-\lambda_1$, then $\widehat{M}\notin D_{\bar{M}}$ for every $\widehat{M}\leq \bar{M}-\lambda_1$.
		
		In an analogous way, let $\widetilde{k}\in\left\lbrace  1,\dots,n-1\right\rbrace $ be such that $n-\widetilde{k}$ is odd and $\lambda_2$ attained as the biggest negative eigenvalue of $T_n[\bar{M}]$ in $X_{\widetilde{k}}$. Using the same arguments, with Lemma \ref{L::2} and Theorem \ref{T::7}, we can affirm that $g_{\widetilde{M},\widetilde{k}}$ has not constant sign on $I\times I$ for $\widetilde{M}\geq\bar{M}-\lambda_2$, then $\widetilde{M}\notin D_{\bar{M}}$ for every $\widetilde{M}\geq \bar{M}-\lambda_2$.
		
		Hence, we have proved that $D_{\bar{M}}\subset\left( \bar{M}-\lambda_1,\bar{M}-\lambda_2\right) $.
		
		\vspace{0.3cm}
		
		Let's see now that $D_{\bar{M}}=\left( \bar{M}-\lambda_1,\bar{M}-\lambda_2\right) $. Denote $M_1=\inf D_{\bar{M}}$ and $M_2=\sup D_{\bar{M}}$. Because of Proposition \ref{P::1}, $D_{\bar{M}}$ should be an open interval, in particular $M_j\neq\bar{M}$ for $j=1,2$.
		
	If $D_{\bar{M}}\neq (\bar{M}-\lambda_1,\bar{M}-\lambda_2)$ then, at least one (or both) of the two following inequalities holds: either $M_1>\bar{M}-\lambda_1$ or $M_2<\bar{M}-\lambda_2$.
	
		Suppose that first inequality is fulfilled.
		
		Since $T_n[M_1]\,u(t)=0$ is not a disconjugate equation on $I$, we have that  $c=\eta(a)\leq b$.
		
		Using Proposition \ref{P::2}, we can ensure the existence of $\ell\in\left\lbrace 1,\dots,n-1\right\rbrace $ such that there exists a solution of $T_n[M_1]\,u(t)=0$, satisfying boundary conditions $(n-\ell,\ell)$ on $[a,c]$.
		
		If $c=b$, we have that $\bar{M}-M_1\in(\lambda_2,\lambda_1)$ will be an eigenvalue of $T_n[\bar{M}]$ in $X_\ell$, and it contradicts the definition of $\lambda_1$ when $n-l$ is even and $\lambda_2$, if  $n-l$ is odd.
		
		So, we have that $c<b$.
		
		Using Proposition \ref{P::3}, we know that $W_\ell[M_1](c)=0$. 
		
		And, since $T_n[M]\,u(t)=0$ is a disconjugate equation on $I$ for $M\in (M_1,M_2)$, we can affirm that $W_\ell[M_1+\delta](t)\neq0\,,\quad t\in(a,b]$ for every $0<\delta<M_2-M_1$.
		
		Since $W_\ell[M](t)$ is a continuous function of $M$, we can affirm that $W_\ell[M_1](t)$ is of constant sign on a neighborhood of $c$, so it has a double zero at $c$ as a function of $t$.

			Using the expression of the derivative of the Wronskian given in \cite{Neh}, we know that
			
		\begin{equation}\label{Ec::dw}0=\dfrac{\partial}{\partial t}W^n_{\ell}[M_1](t)_{\mid t=c}=\left| \begin{array}{ccc}
				y_1[M_1](c)&\dots &y_\ell[M_1](c)\\
					&\vdots&\\
				y_1^{(\ell-2)}[M_1](c)&\dots&y_\ell^{(\ell-2)} [M_1](c)\\
			
				y_1^{(\ell)}[M_1](c)&\dots&y_\ell^{(\ell)}[M_1](c)\end{array}\right|.\end{equation}

We take the following solution of \eqref{e-Ln}
\[y(t)=\left| \begin{array}{ccc}
y_1[M_1](c)&\dots &y_\ell[M_1](c)\\
&\vdots&\\
y_1^{(\ell-2)}[M_1](c)&\dots&y_\ell^{(\ell-2)} [M_1](c)\\

y_1[M_1](t)&\dots&y_\ell[M_1](t)\end{array}\right|\,.\]

Since it is a linear combination of $y_1[M_1],\dots y_\ell[M_1]$, it is obvious that it has $n-\ell$ zeros at $a$.

Since $W_\ell[M_1](c)=0$, $y$ trivially verifies the boundary conditions $(n-\ell,\ell)$ at $[a,c]$. And, using Proposition \ref{P::2}, since $c=\eta_M(a)$ we know that it does not vanish on the open interval $(a,c)$.

 Because of equality (\ref{Ec::dw}) it is not difficult to verify that such function also verifies the boundary conditions $(n-\ell-1,\ell+1)$ on $[a,c]$.

%Let's see what happens on $c$. Since $W_\ell^n[M_1](c)=0$, it is verified that $y^{(\ell-1)}(c)=0$. 
%
%Now, we are going to solve the following linear system
%
%\begin{eqnarray}
%\nonumber x_1\,y_1[M_1](c)+\cdots+x_{\ell-1}\,y_{\ell-1}(c)&=&-\alpha_\ell\,y_\ell[M_1](c)\,,\\
%\nonumber&\vdots&\\
%\nonumber x_1\,y_1^{(\ell-2)}[M_1](c)+\cdots+x_{\ell-1}\,y_{\ell-1}^{(\ell-2)}(c)&=&-\alpha_\ell\,y_\ell^{(\ell-2)}[M_1](c)\,.
%\end{eqnarray}
%
%Since $W_{\ell-1}^n[M_1](c)=\alpha_\ell\neq 0$, we can use the Cramer's rule for every $k=1,\dots,\ell-1$, to obtain
%
%{\scriptsize\[ x_k=\dfrac{\left| \begin{array}{cccccccc}
% y_1[M_1](c)&\cdots&y_{k-1}[M_1](c)&-\alpha_\ell\,y_{\ell}[M_1](c)&y_{k+1}[M_1](c)&\cdots& y_{\ell-1}[M_1](c)\\
%\vdots&\cdots&&\vdots&&\cdots&\vdots\\
%y_1[M_1](c)&\cdots&y_{k-1}[M_1](c)&-\alpha_\ell\,y_{\ell}[M_1](c)&y_{k+1}[M_1](c)&\cdots& y_{\ell-1}[M_1](c) \end{array}\right|}{\alpha_\ell} =\alpha_k\,\]}
%
%So, $y$ verifies the boundary conditions $(n-\ell,\ell)$.
%
%Because of equality (\ref{Ec::dw}), we can affirm that $y^{(\ell)}(c)=0$. Hence, $y(t)$ also verifies boundary conditions $(n-\ell-1,\ \ell+1)$, and using Proposition \ref{P::9} we know that it is of constant sign. 

As consequence, denoting as $g_{\bar{M},n-\ell}$ and $g_{\bar{M},n-\ell-1}$,  the related Green's functions to problem \eqref{e-Ln} -- \eqref{e-k-n-k}, for $M=\bar{M}$, $b=c$ and $k=l$ or $k=l+1$, respectively, we deduce the following equalities for all $t\in [a,c]$

\[
y(t)=\int_a^c g_{\bar{M},n-\ell}(t,s)\,(\bar{M}-M_1)\,y(s)\,ds\,,\quad \text{and} \quad
y(t)=\int_a^c g_{\bar{M},n-\ell-1}(t,s)\,(\bar{M}-M_1)\,y(s)\,ds\,.\]
Using Lemma \ref{L::1} we know that $g_{\bar{M},n-\ell}(t,s)$ and $g_{\bar{M},n-\ell-1}(t,s)$  have different constant sign on $[a,c]\times(a,c)$, so last equalities cannot be satisfied at the same time. Then we can affirm that $M_1=\bar{M}-\lambda_1$.

With analogous arguments we conclude that $M_2=\bar{M}-\lambda_2$. 

If $n=2$, the argument related to $\lambda_2$ is the same. 

Suppose that there exist $M^*<\bar{M}$ such that the equation \eqref{e-Ln} is not disconjugate on $[a,b]$, then $M_1<\bar{M}$ must be defined and also $c=\eta_M(a)$. If $c=b$ it implies the existence of a positive eigenvalue of $T_n[\bar{M}]$ in $X_1$, which is a contradicts Theorem \ref{T::10}. 

Then, we can proceed analogously to the case where $n>2$ with $c<b$ and arrive to a contradiction. So, our result is proved.
\end{proof}

\subsection{Particular cases}
Since $u^{(n)}(t)=0$ is always a disconjugate equation at any interval (see \cite{Cop} for details), this result can obviously be applied to  operators $T_n[M]\,u(t)=u^{(n)}(t)+M\,u(t)$. So, in order to  construct the optimal parameter set of disconjugacy, we only need to calculate the closest to zero eigenvalues.

Until eighth order the eigenvalues of problems $(k,n-k)$ are explicitly obtained on \cite[Section 4]{CabSaa}.
For instance, in the second order case we know that the closest to zero eigenvalue of $u''$ in $X_1$ is $-\pi^2$, so the optimal interval of disconjugacy is $(-\infty,\pi^2)$.

Also, in third order we have  that  the least positive eigenvalue of operator $u'''$ in $X_1$  is $\left( \lambda_3^1\right)  ^3$ and the biggest negative eigenvalue of operator $u'''$ in $X_2$  is $-\left(  \lambda_3^1\right)  ^3$, where $ \lambda_3^1\approxeq 4.23321$ is the least positive solution of
\begin{equation*}
\cos \left(\frac{1}{2} \sqrt{3} \lambda \right)-\sqrt{3} \sin \left(\frac{1}{2} \sqrt{3} \lambda  \right)=e^{\frac{-3\,\lambda}{2}}\,.
\end{equation*}
So, we can affirm that $u'''(t)+M\,u(t)=0$ is a disconjugate equation if, and only if $M\in \left( -\left(  \lambda_3^1\right)  ^3,\left(  \lambda_3^1\right)  ^3\right) $.

In fourth order we obtain that  the biggest negative eigenvalue of operator $u^{(4)}$ in $X_1$ and $X_3$ is given by $-\left( \lambda_4^1\right) ^4$ and the least positive eigenvalue of operator $u^{(4)}$ in $X_2$ is $\left( \lambda_4^2\right) ^4$, where $\lambda_4^1\approxeq 5.553$ is the least positive solution of
\begin{equation*}
\tan\left( \dfrac{\lambda}{\sqrt{2}}\right) =\tanh\left( \dfrac{\lambda}{\sqrt{2}}\right) \,,
\end{equation*}
and $\lambda_4^2\approxeq4.73004$ is the least positive solution of
\begin{equation*}
\cos (\lambda ) \cosh (\lambda )=1.
\end{equation*}
Hence, we can conclude that $u^{(4)}(t)+M\,u(t)=0$ is  disconjugate  in $[0,1]$ if, and only if $M\in \left( -\left(  \lambda_4^2\right)  ^4,\left(  \lambda_4^1\right)  ^4\right) $.

 We point out that our result it is also applicable to other kind of operators, such as, for example $T_6[M]\,u(t)=u^{(6)}(t)-8\,u^{(3)}(t)+M\,u(t)$ on $[0,1]$. 
It is not difficult to verify, by means of the characterization of the first right point conjugate of $a$, given in Proposition \ref{P::4}, that $T_6[0]\,u(t)=0$  is a disconjugate equation on $[0,1]$. So, we can apply Theorem \ref{T::aut}. 

From the self-adjoint character of operator $T_6[0]$, one can conclude (see \cite{CabSaa} for details) that the eigenvalues related to boundary conditions $(2,4)$ and $(4,2)$, and $(5,1)$ and $(1,5)$ are the same. So, we only need to calculate the eigenvalues related to $(1,5)$, $(2,4)$ and $(3,3)$ boundary conditions. Numerically, we obtain that
\begin{itemize}
	\item the biggest negative eigenvalue related to the boundary conditions $(5,1)$ is $\lambda_1\approxeq -(8.40247)^6$.
	\item the least positive eigenvalue related to the boundary conditions $(4,2)$ is $\lambda_2\approxeq (6.717)^6$.
	\item the biggest negative eigenvalue related to the boundary conditions $(3,3)$ is $\lambda_3\approxeq -(6.2835)^6$.
\end{itemize}

Then we can conclude that $T_6[M]\,u(t)=0$ is a disconjugate equation on $[0,1]$ if, and only if, $M\in\left(-(6.717)^6,(6.2835)^6\right)$.

\vspace{0.3cm}

Let's consider now the operator
$T_4[M]\,u(t)=u^{(4)}(t)+50\,u''(t)+M\,u(t)$. In this case, if we study the operator for $M=0$, we obtain 
\[W_2^4[0](t)=\frac{5 \sqrt{2} t \sin \left(5 \sqrt{2} t\right)+2 \cos \left(5 \sqrt{2} t\right)-2}{2500}\,,\]
which
 changes sign on $[0,1]$. So, $T_4[0]\,u(t)=0$ is not a disconjugate equation on $[0,1]$.

But, if we take $\bar{M}=200$, we can verify, studying its different Wronskians, see Propositions \ref{P::3} and \ref{P::4}, that $T_n[200]\,u(t)=0$ is a disconjugate equation on $[0,1]$. Hence we can apply  Theorem \ref{T::aut} to this problem.

Due to the fact that it is also a self adjoint problem, we only need to obtain the eigenvalues related to the boundary conditions $(3,1)$ and $(2,2)$.

The eigenvalue related to the boundary conditions $(3,1)$ is given by $-\lambda_1^4$ where $\lambda_1\approxeq 3.71137$ is the least positive solution of the following equation
 \[\sqrt{25-\sqrt{425-\lambda ^4}} \sin \left(\sqrt{\sqrt{425-\lambda ^4}+25}\right)-\sqrt{\sqrt{425-\lambda ^4}+25} \sin \left(\sqrt{25-\sqrt{425-\lambda ^4}}\right)=0\,,\]
and the eigenvalue related to the boundary conditions $(2,2)$ is given by $\lambda_2^4$ where $\lambda_2\approxeq 2.77939$ is the least positive solution of the following equation
 {\scriptsize \[-2 \sqrt{200-\lambda ^4}+50 \sin \left(\sqrt{25-\sqrt{\lambda ^4+425}}\right) \sin \left(\sqrt{\sqrt{\lambda ^4+425}+25}\right)+2 \sqrt{200-\lambda ^4} \cos
	\left(\sqrt{25-\sqrt{\lambda ^4+425}}\right) \cos \left(\sqrt{\sqrt{\lambda ^4+425}+25}\right)=0\,.\]}

Hence we  conclude that  $T_4[M]\,u(t)=0$ is a disconjugate equation if, and only if, \[M\in (200-\lambda_2^4,200+\lambda_1^4)\approxeq(140.324,389.73)\,.\]

This characterization is also applicable to problems with non-constant coefficients. For instance, let's consider the  third order operator $T_3[M]\,u(t)=u^{(3)}(t)+\cos(10 t)\,u''(t)+M\,u(t)$ on $[0,1]$. 

Let's see that it is  disconjugate for $M=0$.

Since every solution of the first order linear differential equation $L_1\,u(t)=u'(t)+\cos(10\,t)\,u(t)=0$ follows the expression $u(t)= c_1\,e^{\sin(10\,t)}$, with $c_1\neq 0$, we conclude that it is disconjugate on any real interval. Also, it is well-known that the equation $L_2\,u(t)=u''(t)=0$ is also disconjugate on any real interval. So, as a direct application of Theorem \ref{T::1}, we can affirm that $L_2\,L_1\,u(t)=T_3[0]\,u(t)=0$ is a disconjugate equation on any real interval.

Now, using Propositions \ref{P::3} and \ref{P::4} again, we can obtain numerically the closest to zero eigenvalues related to the boundary conditions $(2,1)$ and $(1,2)$, which are $\lambda_1\approxeq -4.33149^3$ and $\lambda_2\approxeq 4.29055^3$, respectively. So, we can affirm that $T_3[M]\,u(t)=0$ is a disconjugate equation in $[0,1]$ if, and only if $M\in (-\lambda_2,-\lambda_1)$.

\end{document}